\numberwithin{equation}{section}
\newtheorem{THM}{Theorem}[section]
\newtheorem{LMA}[THM]{Lemma}
\newtheorem{PROP}[THM]{Proposition}
\newtheorem{CORO}[THM]{Corollary}
\newcommand{\showon}{\begin{eqnarray*}}
\newcommand{\showoff}{\end{eqnarray*}}
\newcommand{\diag}{\mathrm{diag}}
\newcommand{\goesto}{\rightarrow}
\newcommand{\one}{\boldsymbol{1}}
\newcommand{\zero}{\boldsymbol{0}}
\newcommand{\per}{\mathrm{per}}
\renewcommand{\Im}{\mathrm{Im}}
\renewcommand{\i}{\mathrm{i}}
\newcommand{\cyc}{\mathrm{cyc}}
\newcommand{\ma}{\mathsf{ma}}
\newcommand{\A}{\EuScript{A}} \renewcommand{\a}{\mathbf{a}}
\newcommand{\C}{\EuScript{C}} \newcommand{\CC}{\mathbb{C}}
\newcommand{\D}{\EuScript{D}}
\newcommand{\HH}{\EuScript{H}}
\newcommand{\M}{\EuScript{M}}
 \newcommand{\RR}{\mathbb{R}}
 \renewcommand{\S}{\mathfrak{S}}
\renewcommand{\v}{\mathbf{v}}
\newcommand{\w}{\mathbf{w}}
 \newcommand{\x}{\mathbf{x}}
 \newcommand{\y}{\mathbf{y}}
\newcommand{\z}{\mathbf{z}}
\begin{document}

\title[Proof of the MCP Conjecture]{Proof of the monotone column \\
permanent conjecture}

\author[Br\"and\'en]{Petter Br\"and\'en}
\thanks{P.B. is a Royal Swedish Academy of Sciences Research Fellow
supported by a grant from the Knut and Alice Wallenberg Foundation.}
\address{Department of Mathematics,\ Stockholm University,\ SE-106 91\
Stockholm,\ Sweden}
\email{pbranden@math.su.se}

\author[Haglund]{James Haglund}
\thanks{Research of J.H. is supported by NSF grants DMS-0553619
and DMS-0901467}
\address{Department of Mathematics,\ University of Pennsylvania,\
Philadelphia, PA, USA,\ 19104}
\email{jhaglund@math.upenn.edu}

\author[Visontai]{Mirk\'o Visontai}
\thanks{Research of M.V. is supported by the Benjamin Franklin Fellowship of the University of Pennsylvania.}
\address{Department of Mathematics,\ University of Pennsylvania,\
Philadelphia, PA, USA,\ 19104}
\email{mirko@math.upenn.edu}

\author[Wagner]{David G. Wagner}
\thanks{Research of D.G.W. is supported by NSERC Discovery Grant OGP0105392.}
\address{Department of Combinatorics and Optimization,\
University of Waterloo,\ Waterloo, Ontario, Canada,\ \ N2L 3G1}
\email{dgwagner@math.uwaterloo.ca}

\keywords{permanent, stable polynomial, only real roots, Ferrers matrix,
Eulerian polynomial, Grace's apolarity theorem, Rayleigh, strongly Rayleigh, negative association.}
\subjclass{15A15; 32A60, 05A05, 05A15.}

\dedicatory{In memoriam Julius Borcea}

\begin{abstract}

Let $A$ be an $n$-by-$n$ matrix of real numbers which are weakly decreasing down each 
column, $Z_n = \text{diag}(z_1,\ldots, z_n)$ a diagonal matrix of indeterminates, and  $J_n$  
the $n$-by-$n$ matrix of all ones.  We prove that $\text{per}(J_nZ_n+A)$ is stable in the $z_i$, 
resolving a recent conjecture of Haglund and Visontai.  This immediately implies 
that $\text{per}(zJ_n+A)$ is a polynomial in $z$ with only real roots, an open conjecture of 
Haglund, Ono, and Wagner from $1999$.  Other applications include a multivariate stable 
Eulerian polynomial, a new proof of Grace's apolarity theorem and new permanental inequalities.

\end{abstract}

\maketitle

\section{The monotone column permanent conjecture.}

Recall that the \emph{permanent} of an $n$-by-$n$ matrix $H=(h_{ij})$
is the unsigned variant of its determinant:
$$
\per(H) = \sum_{\sigma\in\S_n} \prod_{i=1}^n h_{i,\sigma(i)},
$$
with the sum over all permutations $\sigma$ in the symmetric group $\S_n$.
A \emph{monotone column matrix} $A=(a_{ij})$ has real entries which are
weakly decreasing reading down each column:\ that is, $a_{ij}\geq a_{i+1,j}$
for all $1\leq i\leq n-1$ and $1\leq j\leq n$.  Let $J_n$ be the $n$-by-$n$ 
matrix in which every entry is $1$.\\

\noindent\textbf{The Monotone Column Permanent Conjecture (MCPC).}\\
If $A$ is an $n$-by-$n$ monotone column matrix then $\per(zJ_n+A)$
is a polynomial in $z$ which has only real roots.\\

The MCPC first appears as Conjecture 2 of \cite{HOW}.
(Originally with increasing columns -- but the convention of decreasing
columns is clearly equivalent and will be more natural later.)
Theorem 1 of \cite{HOW} proves the MCPC for monotone column matrices
in which every entry is either $0$ or $1$.  Other special cases appear in
\cite{H,HOW,HV,KYZ}, either for $n\leq 4$ or for rather restrictive conditions
on the entries of $A$.  In this paper we prove the MCPC in full
generality\footnote{Thus, it was not really a permanent conjecture.}.
In fact we prove more.  The following multivariate version of the MCPC
originates in \cite{HV}.  Let $Z_n=\diag(z_1,\ldots,z_n)$ be an $n$-by-$n$
diagonal matrix of algebraically independent commuting indeterminates
$\z=\{z_1,\ldots,z_n\}$.  A polynomial $f(z_1,\ldots,z_n)\in\CC[\z]$ is
\emph{stable} provided that either $f\equiv 0$ or whenever $w_j\in\CC$ are
such that $\Im(w_j)>0$ for all $1\leq j\leq n$, then $f(w_1,\ldots,w_n)\neq 0$.
A stable polynomial with real coefficients is \emph{real stable}.
\\

\noindent\textbf{The Multivariate MCP Conjecture (MMCPC).}\\
If $A$ is an $n$-by-$n$ monotone column matrix then $\per(J_nZ_n+A)$
is a real stable polynomial in $\RR[\z]$.\\

Note that $J_nZ_n+A=(z_j+a_{ij})$, so that $z_j$ is associated with the $j$-th
column, for each $1\leq j\leq n$.  We also write
$\per(J_nZ_n+A)=\per(z_j+a_{ij})$ as it seems clearer.  The MMCPC implies the
MCPC since if one sets all $z_j=z$, then $\per(z+a_{ij})$ is a polynomial
in one variable with real coefficients; this diagonalization preserves stability
(see Lemma \ref{basic}(c) below), and a univariate real polynomial is stable if and
only if it has only real roots.

In Section 2 we review some results from the theory of stable polynomials which
are required for our proofs.  
In Section 3 we reduce the MMCPC to the case of $\{0,1\}$-matrices which are
weakly decreasing down columns and weakly increasing from left to right across
rows -- these we call \emph{Ferrers matrices} for convenience.
Then we further transform the MMCPC for Ferrers matrices,
derive a differential recurrence relation for the resulting polynomials, and
use this and the results of Section 2 to prove the conjecture by induction.
In Section 4 we extend these results to sub-permanents of rectangular matrices,
derive a cycle-counting extension of one of them, discuss a multivariate stable
generalization of Eulerian polynomials,  present a new proof of Grace's
apolarity theorem and derive new permanental inequalities.

\section{Stable polynomials.}

Over a series of several papers, Borcea and Br\"and\'en have developed
the theory of stable polynomials into a powerful and flexible technique.
The results we need are taken from \cite{BB4,BB5,Br}; see also Sections 2
and 3 of \cite{W}.

Let $\HH=\{w\in\CC:\ \Im(w)>0\}$ denote the open upper
half of the complex plane, and let $\overline{\HH}$ denote its closure in $\CC$.
As above, $\z=\{z_1,\ldots,z_n\}$ is a set of $n$ indeterminates.
For $f\in\CC[\z]$ and $1\leq j\leq n$, let $\deg_{z_j}(f)$ denote the degree of $z_j$
in $f$.
\begin{LMA}[see Lemma 2.4 of \cite{W}] \label{basic}
These operations preserve stability of polynomials in $\CC[\z]$.\\
\textup{(a)}\ \textup{Permutation:}\ for any permutation $\sigma\in\S_n$,
$f \mapsto f(z_{\sigma(1)},\ldots,z_{\sigma(n)})$.\\
\textup{(b)}\ \textup{Scaling:}\ for $c\in\CC$ and $\a\in\RR^{n}$ with
$\a>\zero$, $f \mapsto cf(a_{1}z_{1},\ldots,a_{n}z_{n})$.\\
\textup{(c)}\ \textup{Diagonalization:}\ for $1\leq i<j\leq n$,
$f \mapsto f(\z)|_{z_i=z_j}$.\\
\textup{(d)}\ \textup{Specialization:}\ for $a\in\overline{\HH}$,
$f \mapsto f(a, z_{2},\ldots,z_{n})$.\\
\textup{(e)}\ \textup{Inversion:}\ if $\deg_{z_1}(f)=d$,
$f \mapsto z_{1}^{d}f(-z_{1}^{-1},z_{2},\ldots,z_{n})$.\\
\textup{(f)}\ \textup{Translation:}\
$f \mapsto f_1 = f(z_1+t,z_2,\ldots,z_n)\in\CC[\z,t]$.\\
\textup{(g)}\ \textup{Differentiation:}\
$f \mapsto \partial f(\z)/\partial z_1$.
\end{LMA}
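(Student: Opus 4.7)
The plan is to verify each of the seven operations in turn by directly unwinding the stability definition. Operations (a), (b), (c), (e), and (f) all follow from a common strategy: each corresponds to a substitution that lifts a hypothetical zero of the resulting polynomial in $\HH^n$ (or $\HH^{n+1}$ for translation) back to a zero of $f$ in $\HH^n$. Permutation is immediate, and diagonalization just reads off the value of $f$ at a point where coordinates $i$ and $j$ coincide. For scaling with $a_j > 0$, one has $\Im(a_j w_j) = a_j \Im(w_j) > 0$, while the prefactor $c \in \CC$ is harmless (the zero polynomial is stable by convention). For translation, $\Im(w_1 + t) > 0$ whenever both $w_1, t \in \HH$. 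For inversion, a direct computation shows that $w \mapsto -1/w$ maps $\HH$ into itself (if $w = a + \i b$ with $b > 0$ then $-1/w = (-a + \i b)/(a^2 + b^2) \in \HH$), and because $\deg_{z_1}(f) = d$ the factor $z_1^d$ ensures that a would-be zero $(w_1, \ldots, w_n) \in \HH^n$ has $w_1 \neq 0$, so the substitution is well defined.

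For (d) specialization, the case $a \in \HH$ is immediate, so the real content is $a \in \RR$. Here I would appeal to Hurwitz's theorem: $f(a, z_2, \ldots, z_n)$ is the uniform-on-compacta limit of the polynomials $f(a + \i\eps, z_2, \ldots, z_n)$ as $\eps \searrow 0$, each of which is stable in $(z_2, \ldots, z_n)$ by the interior case already settled. A limit of nowhere-zero holomorphic functions on the connected domain $\HH^{n-1}$ is either identically zero or nowhere zero there, and either alternative means the limit is stable.

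The main obstacle is (g) differentiation, which I would handle via the Gauss--Lucas theorem from classical complex analysis. Fix an arbitrary $(w_2, \ldots, w_n) \in \HH^{n-1}$ and set $p(z_1) = f(z_1, w_2, \ldots, w_n) \in \CC[z_1]$. If $p \equiv 0$ then $p' \equiv 0$ and there is nothing to prove. Otherwise, stability of $f$ forces every root of $p$ to lie in $\CC \drop \HH$, the closed lower half-plane, which is convex. Gauss--Lucas then places every root of $p'$ in the convex hull of the roots of $p$, hence still in $\CC \drop \HH$. Since $p'(z_1) = (\partial f / \partial z_1)(z_1, w_2, \ldots, w_n)$ and $(w_2, \ldots, w_n) \in \HH^{n-1}$ was arbitrary, the derivative has no zero in $\HH^n$, proving stability of $\partial f / \partial z_1$ and completing the lemma.
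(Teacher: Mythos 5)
Your treatment of (a)--(f) is essentially sound and follows the same route as the paper, which proves only (f) explicitly (by the same observation that $z_1+t\in\HH$ whenever $z_1,t\in\HH$), defers (a)--(e) to the cited references, and remarks that (g) is ``essentially the Gauss--Lucas Theorem''--- exactly your strategy. One cosmetic point in (e): it is the hypothesis $w_1\in\HH$, not the factor $z_1^d$, that guarantees $w_1\neq 0$; the factor's role is only to clear denominators so the result is again a polynomial.

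The one genuine gap is in (g), in the degenerate fibre where the $z_1$-degree collapses. For fixed $(w_2,\ldots,w_n)\in\HH^{n-1}$ you set $p(z_1)=f(z_1,w_2,\ldots,w_n)$ and, when $p\not\equiv 0$, invoke Gauss--Lucas to place the roots of $p'$ in the convex hull of the roots of $p$. But if $p$ is a nonzero \emph{constant} while $\deg_{z_1}f=d\geq 1$, then $p'\equiv 0$: it vanishes at every $w_1\in\HH$, the convex-hull statement is vacuous, and the conclusion ``the derivative has no zero in $\HH^n$'' does not follow --- this is precisely the scenario that must be excluded, since it would produce a zero of $\partial f/\partial z_1$ in $\HH^n$ without $\partial f/\partial z_1$ vanishing identically. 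To close it you need the standard fact that the leading coefficient $f_d(z_2,\ldots,z_n)$ of a stable $f$ with respect to $z_1$ is itself stable, hence (being $\not\equiv 0$) nonvanishing on $\HH^{n-1}$, so that $\deg_{z_1}p=d\geq 1$ on every fibre over $\HH^{n-1}$ and the constant case never occurs. This fact is available by the very Hurwitz device you already use for (d): $f_d(z_2,\ldots,z_n)=\lim_{\lambda\to\infty}(\i\lambda)^{-d}f(\i\lambda,z_2,\ldots,z_n)$ is a locally uniform limit of nowhere-vanishing functions on $\HH^{n-1}$, hence identically zero or nowhere vanishing, and it is not identically zero. (Similarly, in the case $p\equiv 0$ the phrase ``nothing to prove'' should be expanded: stability of $f$ then forces $f\equiv 0$, hence $\partial f/\partial z_1\equiv 0$.) With these supplements your argument is complete.
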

\begin{proof}
Only part (f) is not made explicit in \cite{BB4,BB5,W}.  But clearly if
$z_1\in\HH$ and $t\in\HH$ then $z_1+t\in\HH$, from which the result follows.
\end{proof}
Of course, parts (d,e,f,g) apply to any index $j$ as well, by permutation.
Part (g) is the only difficult one -- it is essentially the Gauss-Lucas Theorem.

\begin{LMA} \label{coeffs}
Let $f(\z,t)\in\CC[\z,t]$ be stable, and let
$$
f(\z,t) = \sum_{k=0}^d f_k(\z) t^k
$$
with $f_d(\z)\not\equiv 0$.  Then $f_k(\z)$ is stable for all $0\leq k\leq d = 
\deg_t(f)$.
\end{LMA}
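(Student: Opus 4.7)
The plan is to exhibit each coefficient $f_k(\z)$ as the output of a composition of stability-preserving operations from Lemma \ref{basic}, applied to $f(\z,t)$ viewed as a stable polynomial in the $n+1$ indeterminates $\z,t$. The key identity is the Maclaurin expansion in $t$:
$$
k!\, f_k(\z) \;=\; \left.\frac{\partial^k}{\partial t^k} f(\z,t) \right|_{t=0},
$$
valid for every $0 \leq k \leq d$. This reduces the problem to showing that each of differentiation in $t$, specialization of $t$ to $0$, and a scalar rescaling preserves stability.

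I would then execute this in three steps. First, apply $\partial/\partial t$ a total of $k$ times to $f(\z,t)$; by Lemma \ref{basic}(g), together with the permutation invariance in part (a) that lets us treat $t$ as the distinguished variable, each differentiation preserves stability, so $\partial^k f/\partial t^k \in \CC[\z,t]$ is stable. Second, specialize $t=0$; since $0\in\overline{\HH}$, this preserves stability by Lemma \ref{basic}(d). Third, scale by $1/k!$ using Lemma \ref{basic}(b) to arrive at $f_k(\z)\in\CC[\z]$.

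I expect the proof to be quite short because each constituent step is already packaged in Lemma \ref{basic}; there is no real obstacle, only two small bookkeeping points. First, Lemma \ref{basic}(d,g) are stated for the first variable, but permutation invariance (a) lets us apply them in the variable $t$. Second, the convention adopted in the paper includes $f\equiv 0$ among the stable polynomials, so there is no issue if some intermediate $f_k$ vanishes identically for $k<d$; the hypothesis $f_d\not\equiv 0$ is used only to make the degree $d$ well-defined and is not otherwise needed for the stability conclusion.
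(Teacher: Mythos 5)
Your proposal is correct and is essentially the paper's own proof: the authors likewise observe that $f_k(\z)$ is a constant multiple of $\partial^k f(\z,t)/\partial t^k\big|_{t=0}$ and invoke the operations of Lemma \ref{basic} (differentiation and setting $t=0$, the latter being specialization at $0\in\overline{\HH}$). Your bookkeeping remarks about permutation of variables and the scalar $1/k!$ only make explicit what the paper leaves implicit.
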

\begin{proof}
Consider any $0\leq k\leq d$.  Clearly $f_k(\z)$ is a constant multiple of $\partial^k f(\z,t)/\partial t^k |_{t=0}$, which is stable by Lemma \ref{basic} (c, g). 
%
\end{proof}

Polynomials $g, h \in \RR[\z]$ are in \emph{proper position}, denoted
by $g \ll h$, if the polynomial $h + \i g$ is stable.  This is the
multivariate analogue of interlacing roots for univariate polynomials with only
real roots.

\begin{PROP}[Lemma 2.8 of \cite{BB5} and Theorem 1.6 of \cite{BB4}] \label{HB-O}
Let $g,h\in\RR[\z]$.\\
\textup{(a)}\ Then $h\ll g$ if and only if $g+th\in\RR[\z,t]$ is stable.\\
\textup{(a)}\ Then $ag+bh$ is stable for all $a,b\in\RR$ if and only if
either $h\ll g$ or $g\ll h$.
\end{PROP}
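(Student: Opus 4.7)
My plan is to prove the two parts separately. For part (a), the implication $g+th$ stable $\Rightarrow h\ll g$ is immediate from Lemma \ref{basic}(d) applied at $t=\i\in\HH\subset\overline\HH$. For the reverse implication I will reduce to the univariate case via the ``restriction to real affine lines'' characterization of stability: a polynomial $F$ in $m$ variables is stable if and only if $F(\a s+\b)\in\CC[s]$ is stable in $s$ for every $\a\in\RR^m$ with $\a>\zero$ and every $\b\in\RR^m$. The forward direction of this characterization iterates Lemma \ref{basic}(b,f,c); the backward direction uses that every $\y\in\HH^m$ can be written as $\a\i+\b$ with $\a>\zero$ and $\b\in\RR^m$.

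Applied to $F(\z,t)=g(\z)+th(\z)$ with $\a=(\a',a_{n+1})$ and $\b=(\b',b_{n+1})$, this principle reduces the claim to stability of the univariate polynomial $\tilde g(s)+(a_{n+1}s+b_{n+1})\tilde h(s)$, where $\tilde g(s):=g(\a' s+\b')$ and $\tilde h(s):=h(\a' s+\b')$. Lemma \ref{basic}(b,f,c) applied to the hypothesis that $g+\i h$ is stable yields that $\tilde g+\i\tilde h$ is stable, i.e., $\tilde h\ll\tilde g$ as univariate polynomials. I will then invoke the classical one-variable Hermite--Biehler theorem to conclude that $\tilde g(s)+t\tilde h(s)$ is stable in $\CC[s,t]$. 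The substitution $t=a_{n+1}s+b_{n+1}$ preserves bivariate stability (since $a_{n+1}>0$ forces $\Im(a_{n+1}s+b_{n+1})>0$ whenever $\Im s>0$), completing the reduction.

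For part (b), the ``if'' direction is a corollary of part (a): WLOG $h\ll g$ yields $g+th$ stable, and specializing $t\mapsto b/a\in\RR\subset\overline\HH$ via Lemma \ref{basic}(d) gives stability of $ag+bh$ when $a\neq 0$; the case $a=0$ reduces to stability of $h$, which is the coefficient of $t$ in $g+th$ and hence stable by Lemma \ref{coeffs}. For the ``only if'' direction I will argue by contradiction: suppose $ag+bh$ is stable for all $a,b\in\RR$ yet neither $g+\i h$ nor $h+\i g$ is stable. Taking $(a,b)=(1,0)$ and $(0,1)$ shows that $g$ and $h$ are separately stable. Choosing witnesses $w_1,w_2\in\HH^n$ with $(g+\i h)(w_1)=0=(h+\i g)(w_2)$ and invoking separate stability of $g,h$ forces $g(w_j),h(w_j)\neq 0$, so the rational function $\Phi:=g/h$ satisfies $\Phi(w_1)=-\i$ and $\Phi(w_2)=+\i$. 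Since $\HH^n\setminus\{h=0\}$ is open and connected (a complex hypersurface has real codimension $2$ in $\CC^n$) and $\Phi$ is continuous there, its image is a connected subset of $\CC$ containing $\pm\i$ and hence must meet $\RR$ at some value $c=\Phi(w_3)$. But then $(g-ch)(w_3)=0$ contradicts the stability of $g-ch$.

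The main obstacle is the univariate reduction in part (a): passing from the fact that $\tilde g+\i\tilde h$ merely has no zero on $\HH$ to the stronger assertion that $\tilde g(s)+t\tilde h(s)$ has no zero on $\HH^2$ requires the realness of the coefficients in an essential way. This is the content of the classical one-variable Hermite--Biehler theorem, which one can prove via Schwarz reflection (noting that the real rational function $\tilde g/\tilde h$ carries $\RR\cup\{\infty\}$ to itself, hence sends the connected set $\HH$ into a single component of $\CC\setminus\RR$, and using $(\tilde g+\i\tilde h)(w)\neq 0$ to determine which component) or via a winding-number calculation.
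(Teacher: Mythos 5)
The paper itself gives no proof of Proposition \ref{HB-O}: it is imported verbatim from Borcea and Br\"and\'en (Lemma 2.8 of \cite{BB5} and Theorem 1.6 of \cite{BB4}). So your proposal should be judged as a reconstruction of those arguments, and in outline it is a correct one, close in spirit to the cited proofs: part (a) by restricting $g+th$ to real lines with positive direction and invoking the univariate Hermite--Biehler theorem, and the ``only if'' half of (b) by a connectedness argument applied to $\Phi=g/h$ on $\HH^n$. (In your (b) argument the discussion of $\HH^n\setminus\{h=0\}$ is superfluous: in the contradiction scenario $h$ is stable and not identically zero, so it has no zeros in $\HH^n$ at all, and $\HH^n$ is convex.) This buys a self-contained argument where the paper simply cites the literature.

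Two points need repair, one of substance. First, your parenthetical justification of the univariate Hermite--Biehler step is wrong as stated: a real rational function does not in general map $\HH$ into a single component of $\CC\setminus\RR$ (for instance $s\mapsto s^2$ fixes $\RR\cup\{\infty\}$ but sends $\i\sqrt{2}\in\HH$ to $-2\in\RR$), so ``$\tilde g/\tilde h$ preserves $\RR\cup\{\infty\}$'' plus connectedness of $\HH$ proves nothing. The classical argument you need is, e.g.: since $\tilde g+\i\tilde h$ has all roots $w$ with $\mathrm{Im}(w)\le 0$, pairing each $w$ with $\bar w$ gives $|\tilde g(s)+\i\tilde h(s)|\ge|\tilde g(s)-\i\tilde h(s)|$ for $s\in\HH$, which is equivalent to $\mathrm{Im}\bigl(\tilde g(s)/\tilde h(s)\bigr)\ge 0$ wherever $\tilde h(s)\ne 0$; this is exactly the statement that $\tilde g(s)+t\tilde h(s)\ne 0$ on $\HH^2$. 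Since you invoke the classical theorem as a black box, this is a repairable blemish rather than a collapse, but the sketch as written would fail. Second, the degenerate cases where a polynomial vanishes identically (which the paper's convention counts as stable) should be addressed: in (a), a line restriction of $g+th$ could a priori vanish identically, defeating the backward direction of the lines criterion -- rule it out by noting it would force $(g+\i h)(\b'+\i\a')=0$ and hence $g=h=0$; and in (b) the final contradiction needs $g-ch\not\equiv 0$, which holds because $g\equiv ch$ would make $g+\i h=(c+\i)h$ stable, contrary to assumption. With these patches your proof is correct.
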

It then follows from Lemma \ref{basic}(d,g) that if $h \ll g$ then both $h$ and $g$
are stable (or identically zero).

\begin{PROP}[Lemma 2.6 of \cite{BB4}] \label{proper}
Suppose that $g\in \RR[\z]$ is stable. Then the sets
$$
\{h \in \RR[\z] : g\ll h \} \quad \mbox{ and }
\quad \{h \in \RR[\z] : h \ll g\}
$$
are convex cones containing $g$.
\end{PROP}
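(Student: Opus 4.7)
The plan is to verify, in parallel for both sets, the three properties of a convex cone: containment of $g$, closure under positive scaling, and closure under addition. Throughout I tacitly assume $g \not\equiv 0$ (the interesting case). The main tool will be the characterization from Proposition \ref{HB-O}(a): $h \ll g$ is equivalent to stability of $g + th \in \RR[\z, t]$, and symmetrically $g \ll h$ to stability of $h + tg$.

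The first two properties are quick. Since $g \ll g$ amounts to stability of $(1+i)g$, which is equivalent to that of $g$, both sets contain $g$. For closure under positive scaling of $\{h : h \ll g\}$, I would apply the scaling operation of Lemma \ref{basic}(b) to $g + th$, sending $t \mapsto \lambda t$ for $\lambda > 0$; this gives stability of $g + t(\lambda h)$, hence $\lambda h \ll g$. The $g \ll h$ side scales $t$ inside $h + tg$ in the same way.

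Closure under addition is the essential point. Suppose $h_1, h_2$ both belong to $\{h : h \ll g\}$. I would argue by contradiction that $g + i(h_1 + h_2)$ cannot have a zero $\w \in \HH^n$. Given such a zero, stability of $g$ forces $g(\w) \neq 0$, so dividing through yields $u_1 + u_2 = i$ with $u_j := h_j(\w)/g(\w)$. On the other hand, iterated specialization via Lemma \ref{basic}(d) of $g + th_j$ at the coordinates of $\w$ produces a stable univariate polynomial $g(\w) + t h_j(\w)$ in $t$; when $h_j(\w) \neq 0$ its unique root is $-1/u_j$, and requiring this root to lie outside $\HH$ is exactly $\Im(u_j) \leq 0$, a condition trivially met when $u_j = 0$. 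Adding the two inequalities contradicts $\Im(u_1 + u_2) = 1$.

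The symmetric statement for $\{h : g \ll h\}$ follows by the same argument with signs reversed: using stability of $h_j + tg$ one derives $\Im(v_j) \geq 0$ for $v_j := h_j(\w)/g(\w)$, which contradicts the analogous relation $v_1 + v_2 = -i$ forced by a hypothetical zero of $(h_1 + h_2) + ig$. The only delicate point in the whole argument is the translation ``root outside $\HH$'' $\Leftrightarrow$ ``$\Im(u) \leq 0$'', a direct computation using $\Im(-1/u) = \Im(u)/|u|^2$, and I expect nothing beyond Proposition \ref{HB-O}(a) together with Lemma \ref{basic}(b,d) to be needed.
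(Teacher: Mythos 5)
Your argument is correct, but note that the paper itself gives no proof of Proposition \ref{proper}: it is imported verbatim as Lemma 2.6 of \cite{BB4}, so there is nothing internal to compare against. What you have written is a valid self-contained derivation, and it is essentially the standard Hermite--Biehler-style proof of that lemma: containment of $g$ and closure under positive scaling follow from Proposition \ref{HB-O}(a) plus Lemma \ref{basic}(b), and closure under addition follows by specializing $g+th_j$ (resp.\ $h_j+tg$) at a hypothetical zero $\w\in\HH^n$ via Lemma \ref{basic}(d), reading off from the resulting degree-one polynomial in $t$ that $\Im\bigl(h_j(\w)/g(\w)\bigr)\leq 0$ (resp.\ $\geq 0$), and contradicting the relation $u_1+u_2=\i$ (resp.\ $v_1+v_2=-\i$) forced by the zero; the computation $\Im(-1/u)=\Im(u)/|u|^2$ and the $h_j(\w)=0$ case are handled correctly, and $g(\w)\neq 0$ guarantees the specialized polynomial is not identically zero, so stability of the univariate polynomial is legitimately invoked. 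Your explicit caveat $g\not\equiv 0$ is also the right reading of the statement: for $g\equiv 0$ both sets would be the set of all real stable polynomials, which is not closed under addition (e.g.\ $z_1^2$ and $1$ are stable but $z_1^2+1$ is not), so the nonvanishing of $g(\w)$ you use is genuinely needed and consistent with the intended hypothesis in \cite{BB4}.
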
 

\begin{PROP}\label{cone}
Let $V$ be a real vector space, $\phi : V^{n}\rightarrow \RR$
a multilinear form, and $e_1,\ldots,  e_n, v_2, \ldots, v_n$ fixed vectors
in $V$. Suppose that the polynomial 
$$
\phi(e_1,  v_2 +z_2e_2, \ldots,  v_n + z_ne_n)
$$
in $\RR[\z]$ is not identically zero. Then the set of all $v_1 \in  V$
for which the polynomial 
$$
\phi(v_1+z_1e_1,  v_2 +z_2e_2, \ldots,  v_n + z_ne_n)
$$ 
is stable is either empty or a convex cone (with apex $0$) containing
$e_1$ and $-e_1$.
\end{PROP}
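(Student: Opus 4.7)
The plan is to use multilinearity of $\phi$ in the first slot to isolate the $(v_1, z_1)$ dependence, and then invoke Propositions \ref{HB-O} and \ref{proper} directly. Write $\z' = (z_2,\ldots,z_n)$ and set
\begin{align*}
F(v_1;\z) &:= \phi(v_1+z_1e_1,\,v_2+z_2e_2,\ldots,v_n+z_ne_n), \\
G(\z') &:= \phi(e_1,\,v_2+z_2e_2,\ldots,v_n+z_ne_n), \\
H(v_1;\z') &:= \phi(v_1,\,v_2+z_2e_2,\ldots,v_n+z_ne_n).
\end{align*}
Multilinearity gives $F(v_1;\z) = H(v_1;\z') + z_1 G(\z')$, where $G\not\equiv 0$ by hypothesis and $v_1\mapsto H(v_1;\cdot)$ is an $\RR$-linear map $V\to\RR[\z']$.

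Let $S$ denote the set of $v_1\in V$ for which $F(v_1;\z)$ is stable. If $S$ is empty, we are done. Otherwise fix $v_1^\star\in S$, view $F(v_1^\star;\z)$ as a polynomial in $z_1$ with coefficients in $\RR[\z']$, and apply Lemma \ref{coeffs} to conclude that both $H(v_1^\star;\cdot)$ and $G$ are stable; in particular $G$ is stable. Now match $F(v_1;\z) = H(v_1;\z') + z_1 G(\z')$ against the template $g+th$ of Proposition \ref{HB-O}(a), with $z_1$ playing the role of the auxiliary variable $t$: we obtain that $F(v_1;\z)$ is stable if and only if $G\ll H(v_1;\cdot)$ in $\RR[\z']$. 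Hence $S = \{v_1\in V : H(v_1;\cdot)\in C\}$ where $C := \{h\in\RR[\z'] : G\ll h\}$.

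By Proposition \ref{proper} applied to the stable polynomial $G$, the set $C$ is a convex cone with apex $0$ in $\RR[\z']$. Since $v_1\mapsto H(v_1;\cdot)$ is linear, its preimage $S$ is a convex cone with apex $0$ in $V$. Finally, multilinearity yields $H(\pm e_1;\z') = \pm G$, so $F(\pm e_1;\z) = (1\pm z_1)G$; each of these is stable as a product of the one-variable stable polynomial $1\pm z_1$ with the stable $G$, so both $e_1$ and $-e_1$ lie in $S$.

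The argument is essentially formal once the above decomposition is in place; the only delicate point is recognizing that stability of $F$ is equivalent to a proper-position condition between $G$ and the linearly varying coefficient $H(v_1;\cdot)$, after which the cone statement and the membership of $\pm e_1$ both follow by pulling back through the linear map $v_1\mapsto H(v_1;\cdot)$. I do not anticipate any substantive obstacle.
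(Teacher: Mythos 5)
Your proof is correct and follows essentially the same route as the paper's: decompose $F(v_1;\z)=F_{v_1}+z_1F_{e_1}$ by multilinearity, recognize stability as the proper-position condition $F_{e_1}\ll F_{v_1}$ via Proposition \ref{HB-O}(a), pull back the convex cone of Proposition \ref{proper} through the linear map $v_1\mapsto F_{v_1}$, and note that $\pm e_1$ yield the stable polynomials $(\pm 1+z_1)F_{e_1}$. The only cosmetic difference is that you obtain stability of $G=F_{e_1}$ from Lemma \ref{coeffs}, while the paper gets it directly from nonemptiness of the cone together with the remark following Proposition \ref{HB-O}.
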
 
\begin{proof}
Let $C$ be the set of all $v_1 \in V$ for which the polynomial
$\phi(v_1+z_1e_1,  v_2 +z_2e_2, \ldots,  v_n + z_ne_n)$
is stable. For $v\in V$ let $F_v =   \phi(v,  v_2 +z_2e_2, \ldots,  v_n + z_ne_n)$.
Since
$$
\phi(v_1+z_1e_1,  v_2 +z_2e_2, \ldots,  v_n + z_ne_n)= F_{v_1} + z_1F_{e_1}, 
$$ 
we have $C = \{v \in V : F_{e_1} \ll F_{v}\}$. Moreover since $F_{\lambda v+ \mu w}
= \lambda F_v + \mu F_w$ it follows from Proposition \ref{proper} that $C$ is a
convex cone provided that $C$ is non-empty. If $C$ is nonempty then
$F_v + z_1F_{e_1}$ is stable for some $v \in V$. But then $F_{e_1}$ is stable,
and so is
$$
(\pm 1 + z_1)F_{e_1} = \phi(\pm e_1+z_1e_1,  v_2 +z_2e_2, \ldots,  v_n + z_ne_n)
$$
which proves that $\pm e_1 \in C$. 
\end{proof}
(Of course, by permuting the indices Proposition \ref{cone} applies to
any index $j$ as well.)

Let $\CC[\z]^\ma$ denote the vector subspace of \emph{multiaffine} polynomials:\
that is, polynomials of degree at most one in each indeterminate.

\begin{PROP}[Theorem 5.6 of \cite{Br}] \label{Delta}
Let $f\in\RR[\z]^\ma$ be multiaffine.  The following are equivalent:\\
\textup{(a)}\ $f$ is real stable.\\
\textup{(b)}\ For all $1\leq i<j\leq n$ and all $\a\in\RR^n$,
$f_i(\a)f_j(\a)-f(\a)f_{ij}(\a)\geq 0$, in which the subscripts denote
partial differentiation.
\end{PROP}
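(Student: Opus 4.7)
The plan is to prove the two implications separately. For $(a)\Rightarrow(b)$, fix $i<j$ and $\a\in\RR^n$ and specialize $z_k=a_k$ for $k\notin\{i,j\}$; since each $a_k\in\overline{\HH}$, Lemma \ref{basic}(d) keeps the resulting bivariate multiaffine polynomial $g(z_i,z_j)=A+Bz_i+Cz_j+Dz_iz_j$ real stable, with $A,B,C,D\in\RR$ depending only on $(a_k)_{k\notin\{i,j\}}$. A short multiaffine calculation gives
$$f_i(\a)f_j(\a)-f(\a)f_{ij}(\a) = BC-AD,$$
independently of $a_i$ and $a_j$. It thus suffices to verify $BC-AD\geq 0$ for any real stable bivariate multiaffine $g$, and solving $g(z_i,z_j)=0$ for $z_j$ as a M\"obius function of $z_i$ one computes
$$\Im(z_j) = \frac{(AD-BC)\,\Im(z_i)}{|C+Dz_i|^2};$$
stability forces this quantity to be nonpositive on $\HH$, giving $BC-AD\geq 0$.

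For $(b)\Rightarrow(a)$, I would attempt an induction on $n$, with the base cases $n\leq 2$ handled by the bivariate argument above. For $n\geq 3$, decompose $f=g+z_1h$ with $g,h\in\RR[z_2,\ldots,z_n]^{\ma}$. For indices $i,j\geq 2$ with $i<j$, direct expansion yields
$$f_if_j-ff_{ij} = (g_ig_j-gg_{ij})+z_1(g_ih_j+g_jh_i-gh_{ij}-hg_{ij})+z_1^2(h_ih_j-hh_{ij}),$$
and nonnegativity of this quadratic in $z_1\in\RR$ at every real specialization of $z_2,\ldots,z_n$ forces the constant and leading coefficients to be nonnegative---yielding property (b) for $g$ and for $h$---together with a nonpositive discriminant encoding a mixed Rayleigh-type inequality between the two. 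By the induction hypothesis, $g$ and $h$ are each real stable, so by Proposition \ref{HB-O}(a) it then suffices to show $h\ll g$.

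Establishing $h\ll g$ is the main obstacle. A slicewise version does hold cheaply: property (b) for $f$ applied to the pair $(1,j)$ simplifies to $hg_j-gh_j\geq 0$, which upon specializing all but one coordinate to real values becomes precisely the bivariate Rayleigh discriminant for $g|_\b+t\,h|_\b$, so the forward direction furnishes $h|_\b\ll g|_\b$ for every real $\b$. Moreover, the auxiliary polynomial $F(z_2,\ldots,z_n,t):=g+t\,h$ is multiaffine in $n$ variables and a routine check shows it inherits property (b) from $f$. The difficulty is that $F$ has the same number of variables as $f$, so the naive induction on $n$ does not close; to close it I would either strengthen the inductive hypothesis to prove $(b)\Rightarrow(a)$ and $h\ll g$ simultaneously through a joint coefficient-expansion identity, or invoke Hurwitz's theorem together with the convex-cone structure of Proposition \ref{cone} to continuously deform the verified slicewise proper position into genuine multivariate proper position on $\HH^n$. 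This lifting step is where I expect the main technical obstacle to lie.
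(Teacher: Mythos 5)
The paper does not prove Proposition \ref{Delta}: it is imported from Br\"and\'en's earlier work \cite{Br} (Theorem 5.6), so there is no internal proof to compare with; what you are attempting is a reproof of that theorem. Your direction (a)$\Rightarrow$(b) is correct: the specialization via Lemma \ref{basic}(d) to a bivariate multiaffine polynomial $A+Bz_i+Cz_j+Dz_iz_j$, the identity $f_i f_j-f f_{ij}=BC-AD$, and the M\"obius computation of $\Im(z_j)$ do yield $BC-AD\geq 0$ (modulo routine degenerate cases such as $C=D=0$ or $f\equiv 0$, and note that your ``base case $n\leq 2$'' for the converse still needs the reverse bivariate implication, which you state but do not write out). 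This is the easy half of the theorem.

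The hard half, (b)$\Rightarrow$(a), is not established. Your reduction is fine as far as it goes: writing $f=g+z_1h$, the quadratic-in-$z_1$ expansion gives condition (b) for $g$ and $h$, induction gives their real stability, and by Proposition \ref{HB-O}(a) everything hinges on proving $h\ll g$. But the only information you extract toward this is $hg_j-gh_j\geq 0$ at real points, i.e.\ proper position of the one-variable restrictions $h|_{\b}\ll g|_{\b}$; and, as you yourself note, the auxiliary polynomial $g+th$ is just $f$ with $z_1$ renamed, so feeding it back into the induction is circular. Passing from slicewise (one variable at a time, the rest real) proper position to genuine multivariate proper position is essentially the content of the theorem --- if such a lifting were automatic, the whole statement would reduce at once to the bivariate case by the same kind of slicing --- so it cannot be waved through. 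Neither proposed repair is developed: the ``joint coefficient-expansion identity'' is not exhibited, and Proposition \ref{cone} concerns perturbing one argument of a multilinear form by multiples of a fixed vector, which does not bear on deforming $h\ll g$; a Hurwitz-limit argument would require a concrete family of stable polynomials converging to $g+z_1h$, which you have not produced. As it stands, the proposal proves only the easy implication, and the essential content of Br\"and\'en's theorem is missing.
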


A linear transformation $T:\CC[\z]\goesto\CC[\z]$ \emph{preserves stability}
if $T(f)(\z)$ is stable whenever $f(\z)$ is stable.

\begin{PROP}[Theorem 1.1 of \cite{BB5}] \label{BB}
Let $T:\CC[\z]^{\ma}\goesto\CC[\z]$ be a linear transformation.
Then $T$ preserves stability if and only if either\\
\textup{(a)}\ $T(f) = \eta(f)\cdot p$ for some linear functional
$\eta:\CC[\z]^{\ma}\goesto\CC$ and stable $p\in\CC[\z]$, or\\
\textup{(b)}\ the polynomial $G_T(\z,\w)=T \prod_{j=1}^n(z_j+w_j)$
is stable in $\CC[\z,\w]$.
\end{PROP}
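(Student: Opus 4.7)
The plan is to prove the two directions of the biconditional separately. The \emph{if} direction is the easier half. Case (a) is immediate: if $T(f)=\eta(f)\cdot p$ with $p$ stable, then $T(f)$ is always a scalar multiple of $p$ and hence either identically zero or stable.

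For case (b), the starting point is the expansion
\[
G_T(\z,\w) \;=\; \sum_{S\subseteq\{1,\ldots,n\}} T(z^S)\, w^{S^c},
\]
obtained by writing $\prod_j(z_j+w_j)=\sum_S z^S w^{S^c}$ and applying $T$ in the $\z$-variables, where $z^S := \prod_{j\in S} z_j$. A direct computation shows that for every multiaffine $f(\z)=\sum_S c_S z^S$,
\[
T(f)(\z) \;=\; \tilde f(\partial_\w)\,G_T(\z,\w)\Big|_{\w=0}, \qquad \tilde f(\w) := \sum_S c_S w^{S^c}.
\]
Here $\tilde f$ is real stable whenever $f$ is, since up to overall signs $\tilde f$ is obtained from $f$ by iterated inversion (Lemma \ref{basic}(e)) in every variable. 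It remains to show that applying such a stable multiaffine differential operator $\tilde f(\partial_\w)$ to the jointly stable polynomial $G_T$ and then evaluating at $\w=0$ preserves stability in $\z$. I would establish this by induction on the number of $\w$-variables, at each step reducing to a proper-position statement handled by Lemma \ref{basic}(d,g) and Proposition \ref{HB-O}, with Lemma \ref{coeffs} used to extract and recombine the relevant coefficients.

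For the \emph{only if} direction, assume $T$ preserves stability. For any $\w_0\in\HH^n$, the polynomial $\prod_j(z_j+w_{0,j})$ is stable in $\z$ (its $z_j$-roots lie in $-\HH$), hence $G_T(\z,\w_0)=T\prod_j(z_j+w_{0,j})$ is stable in $\z$. Let $N=\{\w\in\CC^n:G_T(\cdot,\w)\equiv 0\}$, an algebraic subvariety of $\CC^n$ cut out by the coefficients of $G_T$ regarded as a polynomial in $\z$. Three subcases arise. If $G_T\equiv 0$ identically, then $T\equiv 0$ and (a) holds trivially with $p=0$. If $G_T\not\equiv 0$ and $N\cap\HH^n=\emptyset$, then for every $\w_0\in\HH^n$ the slice $G_T(\cdot,\w_0)$ is a nonzero stable polynomial in $\z$, so $G_T(\z_0,\w_0)\neq 0$ for every $(\z_0,\w_0)\in\HH^{2n}$, which is exactly the joint stability demanded by~(b).

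The remaining case, $G_T\not\equiv 0$ with $N\cap\HH^n\neq\emptyset$, is the main obstacle: here some stable multiaffine polynomial $\prod_j(z_j+w_{0,j})$ lies in the kernel of $T$, and one must deduce that the image of $T$ collapses to a one-dimensional subspace spanned by a stable polynomial $p$, yielding case~(a). The proof in \cite{BB5} propagates this kernel relation by a Hurwitz-type limiting argument combined with the Grace--Walsh--Szeg\H{o} coincidence principle, showing that the existence of even a single such $\w_0$ in $\HH^n$ forces enough degeneracy in $T$ to identify a unique stable generator of its image. This rank-collapse step is the deepest part of the argument and the reason the characterization is nontrivial.
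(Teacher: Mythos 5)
First, a point of reference: the paper does not prove this proposition at all --- it is imported as Theorem 1.1 of \cite{BB5} and used as a black box (the authors only remark later that the direction they need, (b) implies stability preservation, has a proof in \cite{BB5} that avoids Grace's theorem). So your attempt must stand on its own, and as it stands it has two genuine gaps, one in each hard direction. For the sufficiency of (b): the identity $T(f)(\z)=\tilde f(\partial_\w)G_T(\z,\w)\big|_{\w=0}$ is the right starting point, but your claim that $\tilde f(\w)=\sum_S c_S w^{S^c}$ is stable whenever $f$ is, ``up to overall signs,'' is false as stated. Iterated inversion via Lemma \ref{basic}(e) puts a sign $(-1)^{|S|}$ on the term indexed by $S$, not a global sign, and over $\CC$ these signs matter: $f=z_1+\i$ is stable, yet $\sum_S c_S w^{S^c}=1+\i w_1$ vanishes at $w_1=\i\in\HH$. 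This is repairable (use the signed transform together with $-\partial_\w$), but the substantive gap is the next sentence: the assertion that contracting the stable polynomial $G_T$ with a stable multiaffine differential operator and setting $\w=0$ preserves stability is exactly the Lieb--Sokal-type lemma (if $P_0+wP_1$ is stable then $P_0-\partial_z P_1$ is stable) that powers the proof in \cite{BB5}; it does not follow in any routine way from Lemma \ref{basic}(d,g), Proposition \ref{HB-O} and Lemma \ref{coeffs}, and you never actually carry out the claimed induction.

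For the necessity direction, your case analysis is fine up to the point that matters: when $G_T\not\equiv 0$ but $T$ annihilates some $\prod_j(z_j+w_{0,j})$ with $\w_0\in\HH^n$, you must show the image of $T$ degenerates as in (a). Here you explicitly describe what ``the proof in \cite{BB5}'' does (Hurwitz limits plus Grace--Walsh--Szeg\H{o}) rather than giving an argument, and you acknowledge this is the deepest step. Summarizing the reference is not a proof; this rank-collapse case, together with the unproved contraction lemma above, is precisely the mathematical content of the proposition. One further caution specific to this paper's architecture: Grace's apolarity theorem is later \emph{deduced} from the MMCPT, whose proof uses only the (b)-implies-preservation half of this proposition, so if you complete your sketch you should do so without invoking Grace--Walsh--Szeg\H{o} in that half, exactly as the authors warn at the end of Section 4.4.
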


The \emph{complexification} $T_\CC:\CC[\z]\goesto\CC[\z]$ of a linear
transformation $T:\RR[\z]\goesto\RR[\z]$ is defined as follows.
For any $f\in\CC[\z]$ write $f = g + \i h$ with $g,h\in\RR[\z]$, which can be
done uniquely.  Then $T_\CC(f) = T(g)+\i T(h)$. Let also $\hat{T}_\CC= T(g)-\i T(h)$. 

\begin{PROP} \label{R-to-C}
Let $T:\RR[\z]\goesto\RR[\z]$ be a linear transformation and let $f \in \CC|z]$ be a stable polynomial.
If $T$ preserves real stability then either $T_\CC(f)$ is stable, or $\hat{T}_\CC(f)$ is stable. 
\end{PROP}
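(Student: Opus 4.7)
The plan is to reduce to the real-bilinear setting via the decomposition $f = g + \i h$ with $g, h \in \RR[\z]$. Since $f$ is stable, $h \ll g$ holds by definition, and the conclusion asks for stability of either $T_\CC(f) = T(g) + \i T(h)$ (equivalently $T(h) \ll T(g)$) or $\hat T_\CC(f) = T(g) - \i T(h)$ (equivalently $T(g) \ll T(h)$, since then $T(h) + \i T(g)$ is stable and multiplication by the nonzero scalar $-\i$ turns it into $\hat T_\CC(f)$). Proposition \ref{HB-O}(b) furnishes precisely this dichotomy provided that $aT(g) + bT(h)$ is real stable for every $(a,b) \in \RR^2$, so the whole plan is to establish that hypothesis.

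First, I would use Proposition \ref{HB-O}(a) to translate $h \ll g$ into real stability of $g(\z) + t h(\z) \in \RR[\z, t]$. Since $\RR \subset \overline{\HH}$, Lemma \ref{basic}(d) lets me specialize $t$ at every real number, yielding that $g + \alpha h$ is real stable for all $\alpha \in \RR$. In the generic case $h \not\equiv 0$, Lemma \ref{coeffs} applied to the same polynomial also certifies that $g$ and $h$ are each real stable; the degenerate cases $g \equiv 0$ or $h \equiv 0$ collapse at once into the hypothesis that $T$ preserves real stability.

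Next, I would invoke the hypothesis on $T$: since $g + \alpha h$ is real stable, $T(g + \alpha h) = T(g) + \alpha T(h)$ is real stable for each $\alpha \in \RR$, and $T(g), T(h)$ are individually real stable. Multiplication by a nonzero real scalar preserves stability, so this upgrades to real stability of $aT(g) + bT(h)$ for every $(a,b) \in \RR^2$ (the boundary case $a = 0$ is handled by the separate stability of $T(h)$). An application of Proposition \ref{HB-O}(b) then delivers either $T(h) \ll T(g)$ or $T(g) \ll T(h)$, and the opening paragraph converts each alternative into stability of $T_\CC(f)$ or of $\hat T_\CC(f)$ respectively.

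There is no serious obstacle in this argument; it is essentially a structural repackaging of the Borcea--Br\"and\'en characterization of proper position for real polynomials. The only bookkeeping point worth flagging is the $a = 0$ boundary case in ``$aT(g) + bT(h)$ real stable for all $a, b$,'' which is why one needs the stability of $T(g)$ and $T(h)$ individually (via Lemma \ref{coeffs}) and not merely the one-parameter family $T(g) + \alpha T(h)$.
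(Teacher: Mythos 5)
Your proof is correct and takes essentially the same route as the paper: decompose $f=g+\i h$ with $h\ll g$, establish real stability of the whole pencil $aT(g)+bT(h)$ using that $T$ preserves real stability, and then apply Proposition \ref{HB-O}(b) to obtain $T(h)\ll T(g)$ or $T(g)\ll T(h)$, i.e.\ stability of $T_\CC(f)$ or $\hat{T}_\CC(f)$. The only (harmless) difference is that the paper gets stability of $ag+bh$ for all $a,b\in\RR$ directly from the forward implication of Proposition \ref{HB-O}(b), whereas you re-derive it via Proposition \ref{HB-O}(a), specialization of $t$, Lemma \ref{coeffs} and scaling.
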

\begin{proof}
Let $f=g + \i h\in\CC[\z]$ with $g,h\in\RR[\z]$, and assume that $f$ is
stable.  Then $h\ll g$ by definition.  By Proposition \ref{HB-O}(b),
it follows that $ah+bg$ is real stable for all $a,b\in\RR$.  Therefore,
$aT(h)+bT(g)$ is real stable for all $a,b\in\RR$.  By Proposition \ref{HB-O}(b)
again, either $T(h)\ll T(g)$ or $T(g)\ll T(h)$.  By Proposition \ref{HB-O}(a),
either $T(g)+\i T(h)=T_\CC(f)$ is stable, or $T(g)-\i T(h) =
\hat{T}_\CC(f)$ is stable.
%
\end{proof}

\section{Proof of the MMCPC.}

\subsection{Reduction to Ferrers matrices.}

\begin{LMA}\label{ferrers}
If $\per(z_j+a_{ij})$ is stable for all Ferrers matrices,
then the MMCPC is true.
\end{LMA}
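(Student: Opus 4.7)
The plan is to apply Proposition \ref{cone} iteratively, one column at a time, taking $V = \RR^n$, $\phi$ the permanent regarded as a multilinear form in the $n$ column vectors, and $e_j = \one$ for every $j$; with this choice, $v_j + z_j e_j$ is precisely the $j$-th column of $J_nZ_n + A$ when $v_j$ is the $j$-th column of $A$. The non-vanishing hypothesis of Proposition \ref{cone} is satisfied at every stage of the iteration: with $\one$ in position $j$ and the other columns being $v_l + z_l\one$, the coefficient of $\prod_{l \neq j} z_l$ in the resulting polynomial equals $\per(\one,\ldots,\one) = n!$.

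Let $\one_k \in \RR^n$ denote the vector with $k$ leading ones followed by $n-k$ zeros, for $0 \le k \le n$ (so $\one_0 = \zero$ and $\one_n = \one$). Every weakly decreasing $v = (v_1,\ldots,v_n)^T$ admits the telescoping decomposition
$$
v \;=\; v_n\,\one \;+\; \sum_{k=1}^{n-1}(v_k - v_{k+1})\,\one_k,
$$
in which the coefficients $v_k - v_{k+1}$ are non-negative for $1 \le k \le n-1$. Consequently $v$ lies in any convex cone that contains $\pm\one$ together with each $\one_1,\ldots,\one_{n-1}$.

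Combining these two ingredients, the reduction proceeds column by column. Fix weakly decreasing $v_2,\ldots,v_n$. Proposition \ref{cone} says that the set of $v_1$'s for which $\per(J_nZ_n+A)$ is stable is a convex cone containing $\pm\one$; once we know MMCPC in the stronger form where $v_1$ ranges over the prefix vectors $\one_1,\ldots,\one_{n-1}$, the decomposition above shows every weakly decreasing $v_1$ is covered. Iterating this argument for columns $2, 3, \ldots, n$ in turn reduces MMCPC to the case that \emph{every} column of $A$ is some prefix vector $\one_{k_j}$; that is, to the case where $A$ is a $\{0,1\}$-matrix with weakly decreasing columns.

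To finish, I would sort the columns of this $\{0,1\}$-matrix so that the heights $k_j$ are weakly increasing in $j$. Column permutation of $J_nZ_n+A$ corresponds to the analogous permutation of the indeterminates, which preserves stability by Lemma \ref{basic}(a). After sorting, entries are weakly increasing across rows as well, so the resulting matrix is a Ferrers matrix and the hypothesis of the lemma applies. All the real work is carried by Proposition \ref{cone}; the only substantive things to check here are the prefix-vector decomposition of weakly decreasing columns and the observation that sorting columns by height turns a column-monotone $\{0,1\}$-matrix into a Ferrers matrix.
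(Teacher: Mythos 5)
Your proposal is correct and follows essentially the same route as the paper: iterate Proposition \ref{cone} one column at a time with $e_j=\one$, use the fact that a weakly decreasing real column is a nonnegative combination of $\pm\one$ and monotone $\{0,1\}$-columns (your explicit telescoping into prefix vectors), and reduce monotone $\{0,1\}$-matrices to Ferrers matrices by permuting columns via Lemma \ref{basic}(a). The only differences are cosmetic (direction of the iteration and your coefficient-of-$\prod_{l\neq j}z_l$ check of the non-vanishing hypothesis in place of the paper's specialization $v_j=\zero$, $z_j=1$).
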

\begin{proof}
If $\per(z_j+a_{ij})$ is stable for all Ferrers matrices, then by
permuting the columns of such a matrix, the same is true for all monotone
column $\{0,1\}$-matrices.  Now let $A=(a_{ij})$ be an arbitrary $n$-by-$n$
monotone column matrix.  We will show that $\per(z_j+a_{ij})$ is stable by
$n$ applications of Proposition \ref{cone}.

Let $V$ be the vector space of column vectors of length $n$.
The multilinear form $\phi$ we consider is the permanent of an $n$-by-$n$
matrix obtained by concatenating $n$ vectors in $V$.
Let each of $e_1,\ldots,e_n$ be the all-ones vector in $V$.

Initially, let $v_1,v_2,\ldots,v_n$ be arbitrary monotone $\{0,1\}$-vectors.
Then $\phi(v_1+z_1e_1,\ldots,v_n+z_ne_n) = \per(J_nZ_n+H)$
for some monotone column $\{0,1\}$-matrix $H$.
One can specialize any number of $v_j$ to the zero vector, and
any number of $z_j$ to $1$, and the result is not identically zero.
By hypothesis, all these polynomials are stable.

Now we proceed by induction.
Assume that if $v_1,\ldots,v_{j-1}$ are the first $j-1$ columns of $A$,
and if $v_j,\ldots,v_n$ are arbitrary monotone $\{0,1\}$-columns, then
$\phi(v_1+z_1e_1,\ldots,v_n+z_ne_n)$ is stable.  (The base case, $j=1$,
is the previous paragraph.)  Putting $v_j=0$ and $z_j=1$, the resulting
polynomial is not identically zero.  By Proposition \ref{cone} (applied to index $j$),
the set of vectors $v_j$ such that $\phi(v_1+z_1e_1,\ldots,v_n+z_ne_n)$ is
stable is a convex cone containing $\pm e_j$.
Moreover, it contains all monotone $\{0,1\}$-columns, by hypothesis.
Now, any monotone column of real numbers can be written as a nonnegative
linear combination of $-e_1$ and monotone $\{0,1\}$-columns, and
hence is in this cone.  Thus, we may take $v_1,\ldots,v_{j-1},v_j$ to be the
first $j$ columns of $A$, $v_{j+1},\ldots,v_n$ to be arbitrary monotone
$\{0,1\}$-columns, and the resulting polynomial is stable.
This completes the induction step.

After the $n$-th step we find that $\per(J_nZ_n+A)$ is stable.
\end{proof}

\subsection{A more symmetrical problem.}

Let $A=(a_{ij})$ be an $n$-by-$n$ Ferrers matrix, and let $\z=\{z_1,\ldots,z_n\}$.
For each $1\leq j\leq n$, let $y_j=(z_j+1)/z_j$, and let
$Y_n=\diag(y_1,\ldots,y_n)$.  The matrix obtained from $J_nZ_n+A$ by factoring $z_j$
out of column $j$ for all $1\leq j\leq n$ is $AY_n+J_n-A=(a_{ij}y_j+1-a_{ij})$.
It follows that
\begin{equation} \label{z-to-y}
\per(z_j+a_{ij})= z_1 z_2 \cdots z_n\cdot\per(a_{ij}y_j+1-a_{ij}).
\end{equation}

\begin{LMA} \label{y-lma}
For a Ferrers matrix $A=(a_{ij})$, $\per(z_j+a_{ij})$ is stable if and
only if $\per(a_{ij}y_j+1-a_{ij})$ is stable.
\end{LMA}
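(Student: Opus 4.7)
The plan is to exploit identity (\ref{z-to-y}) via the M\"obius transformation $z \mapsto y = (z+1)/z = 1+1/z$. The key geometric fact is that this map sends the open upper half-plane $\HH$ bijectively onto the open \emph{lower} half-plane $-\HH := \{w \in \CC : \Im(w) < 0\}$: for $z \in \HH$ one has $\Im(1/z) = -\Im(z)/|z|^2 < 0$, and the inverse $y \mapsto 1/(y-1)$ carries $-\HH$ back into $\HH$. Coordinatewise, $\y = ((z_j+1)/z_j)_{j=1}^n$ is a bijection between $\HH^n$ and $(-\HH)^n$.

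Write $P(\z) := \per(z_j+a_{ij})$ and $Q(\y) := \per(a_{ij}y_j+1-a_{ij})$. Both polynomials have real coefficients, since the Ferrers matrix $A$ is a $\{0,1\}$-matrix, so by complex conjugation each of them is stable if and only if it is non-vanishing on $(-\HH)^n$ (equivalently on $\HH^n$). Assume $P$ is stable and let $\y \in \HH^n$; setting $z_j = 1/(y_j - 1)$ gives $\z \in (-\HH)^n$, so $P(\z) \neq 0$, and since each $z_j$ is nonzero, identity (\ref{z-to-y}) forces $Q(\y) = P(\z)/(z_1 \cdots z_n) \neq 0$. Thus $Q$ is stable. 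The converse is entirely symmetric: for $\z \in \HH^n$, the tuple $\y = ((z_j+1)/z_j)_{j=1}^n$ lies in $(-\HH)^n$, so stability of $Q$ gives $Q(\y) \neq 0$, whence (\ref{z-to-y}) yields $P(\z) = z_1 \cdots z_n Q(\y) \neq 0$.

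The only subtlety worth flagging is that the M\"obius transformation at hand swaps the upper and lower open half-planes rather than preserving one, so the argument genuinely requires the conjugation symmetry afforded by real coefficients. With this in mind, there is no substantive obstacle; the proof is essentially a packaging of identity (\ref{z-to-y}) together with a M\"obius change of variables.
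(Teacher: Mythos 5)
Your proof is correct and follows essentially the same route as the paper's: both use identity (\ref{z-to-y}) together with the observation that $z\mapsto (z+1)/z$ swaps the open upper and lower half-planes, and both invoke the real coefficients to convert non-vanishing on $(-\HH)^n$ into stability. The only cosmetic difference is that the paper also notes explicitly that the two permanents are not identically zero (so that ``stable'' really means non-vanishing on $\HH^n$), a triviality here since both polynomials are visibly nonzero.
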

\begin{proof}
The polynomials are not identically zero.
Notice that $\Im(z_j)>0$ if and only if $\Im(y_j)=\Im(1+z_j^{-1})<0$.
If $\per(z_j+a_{ij})$ is stable, then $\per(a_{ij}y_j+1-a_{ij})\neq 0$
whenever $\Im(y_j)<0$ for all $1\leq j\leq n$.  Since this polynomial has
real coefficients, it follows that it is stable.  The converse is similar.
\end{proof}

The set of $n$-by-$n$ Ferrers matrices has the following duality
$A\mapsto A^\vee = J_n - A^{\top}$:\ transpose and exchange
zeros and ones.  That is, $A^\vee = (a_{ij}^\vee)$ in which $a_{ij}^\vee =
1-a_{ji}$ for all $1\leq i,j\leq n$.  Note that $(A^\vee)^\vee=A$.
However, the form of the expression $\per(a_{ij}y_j+1-a_{ij})$ is not
preserved by this symmetry.  To remedy this defect, introduce new indeterminates
$\x=\{x_1,\ldots,x_n\}$ and consider the matrix $B(A)=(b_{ij})$ with entries
$b_{ij} = a_{ij}y_j+(1-a_{ij})x_{i}$ for all $1\leq i,j\leq n$.
For example, if
$$
A = \left[\begin{array}{lllll}
0 & 1 & 1 & 1 & 1 \\
0 & 0 & 1 & 1 & 1 \\
0 & 0 & 1 & 1 & 1 \\
0 & 0 & 0 & 1 & 1 \\
0 & 0 & 0 & 0 & 0
\end{array}\right]
\hspace{5mm}\mathrm{then}\hspace{5mm}
B(A) = \left[\begin{array}{lllll}
x_1 & y_2 & y_3 & y_4 & y_5 \\
x_2 & x_2 & y_3 & y_4 & y_5 \\
x_3 & x_3 & y_3 & y_4 & y_5 \\
x_4 & x_4 & x_4 & y_4 & y_5 \\
x_5 & x_5 & x_5 & x_5 & x_5
\end{array}\right].
$$

For emphasis, we may write $B(A;\x;\y)$ to indicate that the row variables are
$\x$ and the column variables are $\y$.  The matrices $B(A)$ and $B(A^\vee)$ have
the same general form, and in fact
\begin{eqnarray} \label{dual1}
\per(B(A^\vee;\x;\y)) = \per(B(A;\y;\x)).
\end{eqnarray}
Clearly $\per(B(A))$ specializes to $\per(a_{ij}y_j+1-a_{ij})$ by setting
$x_i=1$ for all $1\leq i\leq n$.  We will show that $\per(B(A))$ is stable,
for any Ferrers matrix $A$.  By Lemmas \ref{basic}(d), \ref{y-lma}, and
\ref{ferrers}, this will imply the MMCPC.

\subsection{A differential recurrence relation.}

Next, we derive a differential recurrence relation for polynomials of the form
$\per(B(A))$, for $A$ an $n$-by-$n$ Ferrers matrix.
There are two cases:\ either $a_{nn}=0$ or $a_{nn}=1$.
Replacing $A$ by $A^\vee$ and using (\ref{dual1}), if necessary, we can assume
that $a_{nn}=0$.

\begin{LMA} \label{diff-reln}
Let $A=(a_{ij})$ be an $n$-by-$n$ Ferrers matrix with $a_{nn}=0$,
let $k\geq 1$ be the number of $0$'s in the last column of $A$,
and let $A^\circ$ be the matrix obtained from $A$ by deleting the
last column and the last row of $A$.  Then
$$
\per(B(A)) =
k x_n\, \per(B(A^\circ)) + x_n y_n\, \partial \per(B(A^\circ)),
$$
in which
$$
\partial =
\sum_{i=1}^{n-k}\frac{\partial}{\partial x_i}
+ \sum_{j=1}^{n-1}\frac{\partial}{\partial y_j}.
$$
\end{LMA}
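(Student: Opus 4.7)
The plan is to expand $\per(B(A))$ along its last column and combine multilinearity of the permanent with two structural consequences of the Ferrers hypothesis. The first observation I would record is that, since $A$ is weakly increasing along rows and $a_{nn}=0$, any row of $A$ ending in $0$ must vanish identically; therefore rows $n-k+1,\ldots,n$ of $A$ are all zero, while the top $n-k$ rows satisfy $a_{in}=1$. Correspondingly, in $B(A)$ the last column reads $b_{in}=y_n$ for $i\leq n-k$ and $b_{in}=x_i$ for $i>n-k$, and each of the bottom $k$ rows of $B(A)$ is a constant row $(x_i,\ldots,x_i)$.

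With this in hand, I would write $\per(B(A))=\sum_{i=1}^n b_{in}\per(C_i)$, where $C_i$ is $B(A)$ with row $i$ and column $n$ deleted. Invariance of the permanent under row permutations identifies $C_i$ (for $i<n$) with the matrix $\tilde B_i$ obtained from $B(A^\circ)$ by replacing its $i$-th row with $(x_n,\ldots,x_n)$, while $C_n=B(A^\circ)$ directly. For each $i>n-k$ the $i$-th row of $B(A^\circ)$ is already constant $(x_i,\ldots,x_i)$, so multilinearity in row $i$ gives $x_i\per(\tilde B_i)=x_n\per(B(A^\circ))$ and hence $b_{in}\per(C_i)=x_i\per(\tilde B_i)=x_n\per(B(A^\circ))$; summing the $k$ values $i\in\{n-k+1,\ldots,n\}$ produces the first summand $kx_n\per(B(A^\circ))$.

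For $i\leq n-k$, I would expand $\per(\tilde B_i)$ along its constant $i$-th row to obtain $\per(\tilde B_i)=x_n\sum_{j=1}^{n-1}\per(M^{(i,j)})$, where $M^{(i,j)}$ denotes the $(i,j)$-minor of $B(A^\circ)$. Multiplying by $b_{in}=y_n$ and summing over $i\leq n-k$ gives a total contribution $x_ny_n\sum_{i=1}^{n-k}\sum_{j=1}^{n-1}\per(M^{(i,j)})$, which I claim equals $x_ny_n\,\partial\per(B(A^\circ))$. The last identification is the step where the Ferrers hypothesis is truly used: standard multilinearity yields
$$
\frac{\partial\per(B(A^\circ))}{\partial x_i}=\sum_{j:\,a_{ij}=0}\per(M^{(i,j)}),\qquad \frac{\partial\per(B(A^\circ))}{\partial y_j}=\sum_{l:\,a_{lj}=1}\per(M^{(l,j)}),
$$
and in the second sum the constraint $l\leq n-k$ is automatic because the lower rows of $A$ vanish. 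Adding the first (summed over $i\leq n-k$) to the second (summed over $j\leq n-1$) then indexes each pair $(i,j)\in[n-k]\times[n-1]$ exactly once, via $a_{ij}=0$ in the first sum or $a_{ij}=1$ in the second, matching the double sum above.

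The main obstacle is purely combinatorial bookkeeping in this final step; its correctness rests on the Ferrers structure confining the zero rows of $A$ to indices above $n-k$, and no additional analytic or algebraic ingredient beyond the multilinearity of the permanent is required.
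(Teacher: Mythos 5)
Your proof is correct. It rests on the same basic decomposition as the paper's — expand $\per(B(A))$ along the last column and treat the $k$ bottom entries $x_i$ and the top $n-k$ entries $y_n$ separately — but the bookkeeping is genuinely different. The paper works directly with the permutation expansion: it defines a map $\sigma\mapsto\pi(\sigma)$ (replace the largest letter by the last letter), shows it is a bijection on each class $\C_i$ with $T_\sigma=x_nT_{\pi(\sigma)}$, and is $(n-k)$-to-one on the $y_n$-terms, recovering the derivative by grouping terms according to the variable sitting at position $(i_\sigma,j_\sigma)$ of $B(A^\circ)$. You instead use Laplace expansion, row-permutation invariance and multilinearity to reduce each cofactor to $\per(B(A^\circ))$ or to a sum of permanental minors $\per(M^{(i,j)})$, and then identify $\sum_{i\le n-k}\sum_{j\le n-1}\per(M^{(i,j)})$ with $\partial\per(B(A^\circ))$ via the minor-sum expressions for $\partial/\partial x_i$ and $\partial/\partial y_j$, using the $\{0,1\}$ dichotomy of $a_{ij}$ (and the vanishing of the bottom rows of $A$) to see that each pair $(i,j)\in[n-k]\times[n-1]$ is counted exactly once. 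Your version is arguably cleaner and avoids multiplicity counting on permutations; the paper's term-by-term bijection has the advantage that it tracks individual permutations, so it extends with no extra work to the cycle-counting $\alpha$-permanent recurrence (the paper's Lemma 4.3, where $\cyc(\sigma)$ versus $\cyc(\pi(\sigma))$ is read off from the same map), whereas a cofactor-style expansion does not directly apply to $\per(\,\cdot\,;\alpha)$ since the cycle statistic is not local to a row or column. (One cosmetic point: for $i=n$ your constant-row argument via $\tilde B_n$ is not literally defined, but you already noted $C_n=B(A^\circ)$, which handles that term directly.)
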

\begin{proof}
In the permutation expansion of $\per(B(A))$ there are two types of terms:\
those that do not contain $y_n$ and those that do. Let $T_\sigma$ be the
term of $\per(B(A))$ indexed by $\sigma\in\S_n$.  For each
$n-k+1\leq i\leq n$, let $\C_i$ be the set of those terms $T_\sigma$ such
that $\sigma(i)=n$;  for a term in $\C_i$ the variable chosen in the last column
is $x_{i}$.  Let $\D$ be the set of all other terms; for a term in $\D$ the
variable chosen in the last column is $y_n$.

For every permutation $\sigma\in\S_n$, let $(i_\sigma,j_\sigma)$ be such
that $\sigma(i_\sigma)=n$ and $\sigma(n)=j_\sigma$, and define
$\pi(\sigma)\in\S_{n-1}$ by putting $\pi(i)=\sigma(i)$ if $i\neq i_\sigma$,
and $\pi(i_\sigma)=j_\sigma$ (if $i_\sigma\neq n$).
Let $T_{\pi(\sigma)}$ be the corresponding term of $\per(B(A^\circ))$.
See Figure 1 for an example.  Informally, $\pi(\sigma)$ is obtained
from $\sigma$, in word notation, by replacing the largest element with the last
element, unless the largest element is last, in which case it is deleted.

\begin{figure}[tb]
$$
\left[\begin{array}{cccccc}
\cdot & \cdot & \Box & \cdot & \cdot & \cdot \\
\Box & \cdot & \cdot & \cdot & \cdot & \cdot \\
\cdot & \cdot & \cdot & \cdot & \cdot & \Box \\
\cdot & \cdot & \cdot & \cdot & \Box & \cdot \\
\cdot & \Box & \cdot & \cdot & \cdot & \cdot \\
\cdot & \cdot & \cdot & \Box & \cdot & \cdot 
\end{array}\right]
\quad \mapsto \quad
\left[\begin{array}{cccccc}
\cdot & \cdot & \Box & \cdot & \cdot &  \\
\Box & \cdot & \cdot & \cdot & \cdot &  \\
\cdot & \cdot & \cdot & \Box & \cdot &  \\
\cdot & \cdot & \cdot & \cdot & \Box &  \\
\cdot & \Box & \cdot & \cdot & \cdot &  \\
 &  &  &  &  &  
\end{array}\right]
$$
\caption{$\sigma= 3\ 1\ 6\ 5\ 2\ 4$ maps to $\pi(\sigma)= 3\ 1\ 4\ 5\ 2$.} 
\end{figure}

For each $n-k+1\leq i\leq n$, consider all permutations $\sigma$ indexing terms
in $\C_i$.  The mapping $T_\sigma \mapsto T_{\pi(\sigma)}$ is a bijection from the
terms in $\C_i$ to all the terms in $\per(B(A^\circ))$.  Also, for each
$\sigma\in\C_i$, $T_\sigma = x_n T_{\pi(\sigma)}$.  Thus, for each
$n-k+1\leq i\leq n$, the sum of all terms in $\C_i$ is $x_n \per(B(A^\circ))$.

Next, consider all permutations $\sigma$ indexing terms in $\D$.
The mapping $T_\sigma\mapsto T_{\pi(\sigma)}$ is $(n-k)$-to-one
from $\D$ to the set of all terms in $\per(B(A^\circ))$, since one
needs both $\pi(\sigma)$ and $i_\sigma$ to recover $\sigma$.
Let $v_\sigma$ be the variable in position $(i_\sigma,j_\sigma)$ of $B(A^\circ)$.
Then $v_\sigma T_\sigma = x_n y_n T_{\pi(\sigma)}$.  It follows that
for any variable $w$ in the set $\{x_1,\ldots,x_{n-k},y_1,\ldots,y_{n-1}\}$,
the sum over all terms in $\D$ such that $v_\sigma=w$ is
$$
x_n y_n \frac{\partial}{\partial w} \per(B(A^\circ)).
$$
Since $v_\sigma$ is any element of the set
$\{x_1,\ldots,x_{n-k},y_1,\ldots,y_{n-1}\}$, it follows that the sum of all terms in
$\D$ is $x_n y_n \partial \per(B(A^\circ))$.

The preceding paragraphs imply the stated formula.
\end{proof}

\subsection{Finally, proof of the MMCPC}

\begin{THM} \label{perBA-stable}
For any $n$-by-$n$ Ferrers matrix $A$, $\per(B(A))$ is stable.
\end{THM}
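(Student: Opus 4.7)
The plan is to induct on $n$, using the recurrence of Lemma~\ref{diff-reln} together with the convex-cone structure of proper position. The base case $n=1$ is immediate since $\per(B(A))\in\{x_1,y_1\}$, both real stable. For the inductive step, I first reduce to the case $a_{nn}=0$: if instead $a_{nn}=1$, then $A^\vee$ is a Ferrers matrix with $a^\vee_{nn}=0$, and~\eqref{dual1} together with Lemma~\ref{basic}(a) shows that $\per(B(A^\vee))$ is stable if and only if $\per(B(A))$ is. Assuming $a_{nn}=0$, Lemma~\ref{diff-reln} gives
$$
\per(B(A)) \;=\; x_n\bigl(kf + y_n\,\partial f\bigr),
$$
where $f=\per(B(A^\circ))$, $k\geq 1$, and $\partial=\sum_{i=1}^{n-k}\partial_{x_i}+\sum_{j=1}^{n-1}\partial_{y_j}$. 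By the induction hypothesis $f$ is stable; crucially, $f$ is also multiaffine in its variables, since in $B(A^\circ)$ each $x_i$ appears only in row $i$ and each $y_j$ only in column $j$, so each variable occurs at most once in any term of the permanental expansion.

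The key step is to show that $\partial f \ll f$. For each single variable $z$ appearing in $\partial$, multiaffinity of $f$ yields the exact identity $f(\ldots,z+t,\ldots)=f+t\,\partial_z f$; this is stable by the translation part of Lemma~\ref{basic}(f), and hence $\partial_z f \ll f$ by Proposition~\ref{HB-O}(a). Since $\{h\in\RR[\z]:h\ll f\}$ is a convex cone containing each such $\partial_z f$ by Proposition~\ref{proper}, summing over the variables occurring in $\partial$ gives $\partial f \ll f$. Applying Proposition~\ref{HB-O}(a) once more, $f+t\,\partial f$ is stable in a fresh variable $t$; since $k>0$, Lemma~\ref{basic}(b) then yields stability of $kf+t\,\partial f$ as well. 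Renaming $t$ to the fresh variable $y_n$ and multiplying by the stable linear factor $x_n$ (yet another fresh variable) preserves stability, producing $\per(B(A))=x_n(kf+y_n\,\partial f)$ stable and completing the induction.

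The main conceptual obstacle is identifying which proper-position statement to iterate. Once one observes that multiaffinity collapses a single-variable translation into $f+t\,\partial_z f$, and that the proper-position cone of $f$ is closed under non-negative combinations via Proposition~\ref{proper}, the inductive step assembles cleanly from the tools of Section~2. No appeal to the full Borcea--Br\"and\'en symbol calculus of Proposition~\ref{BB} is needed; the convex-cone closure is what makes the mixed sum $\partial=\sum\partial_{x_i}+\sum\partial_{y_j}$ interact well with stability.
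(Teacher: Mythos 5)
Your proof is correct, and it reaches the inductive step by a genuinely different route than the paper. Both arguments reduce (via the duality $A\mapsto A^\vee$ and \eqref{dual1}) to $a_{nn}=0$ and both rest on the recurrence $\per(B(A))=x_n\bigl(k f+y_n\,\partial f\bigr)$ of Lemma \ref{diff-reln} with $f=\per(B(A^\circ))$ stable and multiaffine by induction. At that point the paper treats $T=k+y_n\partial$ as a linear operator on multiaffine polynomials and invokes the Borcea--Br\"and\'en characterization (Proposition \ref{BB}), checking stability of the symbol $G_T(\z,\w)=T\prod_j(z_j+w_j)$ by a direct imaginary-part computation. You instead exploit multiaffinity to get the exact identity $f(\ldots,z+t,\ldots)=f+t\,\partial_z f$, deduce $\partial_z f\ll f$ from translation invariance (Lemma \ref{basic}(f)) and Proposition \ref{HB-O}(a), sum inside the proper-position cone of Proposition \ref{proper} to get $\partial f\ll f$, and then assemble $kf+y_n\partial f$ by Proposition \ref{HB-O}(a) plus scaling, finally multiplying by the trivially stable fresh factor $x_n$ (closure of stability under products is immediate from the definition, even though it is not listed in Lemma \ref{basic}). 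What each approach buys: the paper's symbol computation handles the operator $k+y_n\partial$ in one stroke and generalizes verbatim to the $\alpha$-permanent recurrence (Proposition \ref{alpha}, where the constant becomes $\alpha+k-1$), whereas your argument is more elementary in that it uses only the ``soft'' tools of Section 2 and none of the stability-preserver machinery -- which, incidentally, would let the paper's closing remark about non-circularity with Grace's apolarity theorem be made even more cleanly, since Proposition \ref{BB} is not needed at all. Note also that your derivation of $\partial_z f\ll f$ genuinely needs multiaffinity (as does the paper's use of Proposition \ref{BB}, which is stated for multiaffine inputs), so neither route escapes that hypothesis.
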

\begin{proof}
As above, by replacing $A$ by $A^\vee$ if necessary, we may assume that
$a_{1n}=0$.  We proceed by induction on $n$, the base case $n=1$ being trivial.
For the induction step, let $A^\circ$ be as in Lemma \ref{diff-reln}.
By induction, we may assume that $\per(B(A^\circ))$ is stable;\
clearly this polynomial is multiaffine.
Thus, by Lemma \ref{diff-reln}, it suffices to prove that the linear
transformation $T=k+y_n\partial$ maps stable multiaffine polynomials to stable
polynomials if $k\geq 1$.  This operator has the form $T=k+z_{m}\sum_{j=1}^{m-1}
\partial/\partial z_j$ (renaming the variables suitably).  By Proposition \ref{BB}
it suffices to check that the polynomial
\showon
G_T(\z,\w)
&=& T \prod_{j=1}^m(z_j+w_j)\\
&=& \left(k+z_m\sum_{j=1}^{m-1}\frac{1}{z_j+w_j}\right) \prod_{j=1}^m(z_j+w_j)
\showoff
is stable.
If $z_j$ and $w_j$ have positive imaginary parts for all $1\leq j\leq m$ then
$$
\xi = \frac{k}{z_m} + \sum_{j=1}^{m-1}\frac{1}{z_j+w_j}
$$
has negative imaginary part (since $k\geq 0$).  Thus $z_m\xi\neq 0$.  Also, $z_j+w_j$
has positive imaginary part, so that $z_j+w_j\neq 0$ for each $1\leq j\leq m$.
It follows that $G_T(\z,\w)\neq 0$, so that $G_T$ is stable, completing the
induction step and the proof.
\end{proof}

\begin{proof}[Proof of the MMCPC]
Let $A$ be any $n$-by-$n$ Ferrers matrix.  By Theorem \ref{perBA-stable},
$\per(B(A))$ is stable.  Specializing $x_i=1$ for all $1\leq i\leq n$,
Lemma \ref{basic}(d) implies that $\per(a_{ij}y_j+1-a_{ij})$ is stable.
Now Lemma \ref{y-lma} implies that $\per(z_j+a_{ij})$ is stable.
Finally, Lemma \ref{ferrers} implies that the MMCPC is true.
\end{proof}
Henceforth, we shall refer to the MMCPT -- ``T'' for ``Theorem''.

\section{Further results.}

\subsection{Generalization to rectangular matrices.}

We can generalize Theorem \ref{perBA-stable} to rectangular matrices,
as follows.  Let $A=(a_{ij})$ be an $m$-by-$n$ matrix.
As in the square case, $A$ is a \emph{Ferrers matrix} if it is a
$\{0,1\}$-matrix that is weakly decreasing down columns and increasing across rows.
The matrix $B(A)$ is constructed just as in the square case:
$B(A)=(b_{ij}) = (a_{ij}y_j+(1-a_{ij})x_{i})$, using row variables
$\x=\{x_1,\ldots,x_m\}$ and column variables $\y=\{y_1,\ldots,y_n\}$.
For emphasis, we may write $B(A;\x;\y)$
to indicate that the row variables are $\x$ and the column variables are $\y$.
The symmetry $A\mapsto A^\vee$ takes an $m$-by-$n$ Ferrers matrix to
an $n$-by-$m$ Ferrers matrix. 

Now let $k\leq \min\{m,n\}$.  The \emph{$k$-permanent} of
an $m$-by-$n$ matrix $H=(h_{ij})$ is
$$
\per_k(H) = \sum_R \sum_C \sum_\beta \prod_{i\in R} h_{i,\beta(i)},
$$
in which $R$ ranges over all $k$-element subsets of $\{1,\ldots,m\}$,
$C$ ranges over all $k$-element subsets of $\{1,\ldots,n\}$, and
$\beta$ ranges over all bijections from $R$ to $C$.  (Note that
$\per_0(H)=1$ for any matrix $H$.) In the case $k=m=n$
this reduces to the permanent of a square matrix. 

For an $m$-by-$n$ Ferrers matrix $A$ and $k\leq\min\{m,n\}$, note that
\begin{eqnarray} \label{dual2}
\per_k(B(A^\vee;\x;\y)) = \per_k(B(A;\y;\x)).
\end{eqnarray}
Thus, replacing $A$ by $A^\vee$ if necessary, we may assume that $m\leq n$.

\begin{PROP} \label{m-to-k}
Let $A=(a_{ij})$ be an $m$-by-$n$ Ferrers matrix.
Then $\per_k(B(A))$ is stable for all $k\leq \min\{m,n\}$.
\end{PROP}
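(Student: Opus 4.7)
By the duality \eqref{dual2}, replacing $A$ by $A^\vee$ if necessary, we may assume $m\le n$. The plan is to realize $\per_k(B(A))$ as a specific leading coefficient of $\per(B(\tilde A))$ for a well-chosen square Ferrers extension $\tilde A$ of $A$, and then to extract it by stability-preserving operations from Lemma~\ref{basic}.

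Set $N=m+n-k$ and let $\tilde A$ be the $N\times N$ matrix whose top-left $m\times n$ block equals $A$, whose top-right $m\times(m-k)$ and bottom-right $(n-k)\times(m-k)$ blocks are all ones, and whose bottom-left $(n-k)\times n$ block is all zeros. Since $0\le a_{ij}\le 1$, the extended matrix $\tilde A$ is still weakly decreasing down columns and weakly increasing across rows, so $\tilde A$ is a Ferrers matrix. Theorem~\ref{perBA-stable} then guarantees that $\per(B(\tilde A))$ is stable in its full variable set; write $\tilde x_1,\dots,\tilde x_{n-k}$ for the row variables attached to the added bottom rows and $\tilde y_1,\dots,\tilde y_{m-k}$ for the column variables attached to the added right columns.

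The central step is to expand $\per(B(\tilde A))$ by sorting each permutation $\sigma\in\S_N$ according to the number $a=|\sigma([m])\cap[n]|$ of matches that stay inside the $A$-block.  Non-negativity of the other three block occupancies forces $a\in\{k,\dots,m\}$.  For fixed $a$, the $a$ matches inside the $A$-block assemble into $\per_a(B(A))$; the $n-a$ added bottom rows that land in the $A$-columns yield the symmetric factor $e_{n-a}(\tilde x_1,\dots,\tilde x_{n-k})$ (with $e_r$ the elementary symmetric polynomial of degree $r$); and however the added right columns are split between being covered by top rows and by bottom rows, the $\tilde y$ factors always combine into the single product $\tilde y_1\cdots\tilde y_{m-k}$.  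A short multinomial bookkeeping then gives
\begin{equation*}
\per(B(\tilde A))\;=\;\tilde y_1\cdots\tilde y_{m-k}\cdot(m-k)!\sum_{j=k}^{m}(n-j)!\,\per_j(B(A))\,e_{n-j}(\tilde x_1,\dots,\tilde x_{n-k}).
\end{equation*}

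Finally, $\per(B(\tilde A))$ is multiaffine in $\tilde x_1,\dots,\tilde x_{n-k}$, and $e_{n-j}$ has total degree $n-j\le n-k$ with equality only when $j=k$; in that case $e_{n-k}(\tilde x_1,\dots,\tilde x_{n-k})=\tilde x_1\cdots\tilde x_{n-k}$. Consequently the operator $\partial^{n-k}/\partial\tilde x_1\cdots\partial\tilde x_{n-k}$ (stability-preserving by iterating Lemma~\ref{basic}(g)) picks off precisely the $j=k$ summand and produces $(m-k)!(n-k)!\,\tilde y_1\cdots\tilde y_{m-k}\,\per_k(B(A))$.  Specializing $\tilde y_j=1$ for $j=1,\dots,m-k$ via Lemma~\ref{basic}(d), then dividing through by the positive constant $(m-k)!(n-k)!$, realizes $\per_k(B(A))$ as the image of $\per(B(\tilde A))$ under a composition of stability-preserving operations, hence as a stable polynomial.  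The main (though routine) obstacle is the combinatorial bookkeeping that establishes the block expansion identity above; once that is in place, the rest is immediate from Section~2.
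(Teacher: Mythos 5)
Your proposal is correct, but for the heart of the statement it takes a genuinely different route from the paper. The paper also begins with the duality (\ref{dual2}) to assume $m\le n$, and also applies Theorem \ref{perBA-stable} to a square Ferrers extension of $A$ -- but only the simplest one: append $n-m$ zero rows to get $A'$, so that $\per(B(A'))=(n-m)!\,x_n\cdots x_{m+1}\per_m(B(A))$, which settles the case $k=m$; this is exactly the special case $k=m$ of your matrix $\tilde{A}$ and of your expansion identity. For $k<m$ the paper builds no larger matrix; instead it uses the translation/diagonalization trick (Lemma \ref{basic}(c,f)) to see that $\per_m(B(A)+tJ_{m,n})=\sum_{k=0}^m\per_k(B(A))\binom{n-k}{m-k}(m-k)!\,t^{m-k}$ is stable and then extracts coefficients of $t$ with Lemma \ref{coeffs}, getting all $k$ at once. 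Your alternative -- the $(m+n-k)$-square Ferrers extension with blocks of ones and zeros, the block expansion $\per(B(\tilde{A}))=\tilde y_1\cdots\tilde y_{m-k}\,(m-k)!\sum_{j=k}^{m}(n-j)!\,\per_j(B(A))\,e_{n-j}(\tilde x_1,\ldots,\tilde x_{n-k})$, and extraction of the $j=k$ term by $\partial^{n-k}/\partial\tilde x_1\cdots\partial\tilde x_{n-k}$ followed by setting $\tilde y_j=1$ -- is sound: $\tilde{A}$ is indeed Ferrers, the bookkeeping checks out (each added right column contributes its $\tilde y$ exactly once, giving the factor $(m-k)!\,\tilde y_1\cdots\tilde y_{m-k}$, and the bottom rows meeting the $A$-columns give $(n-j)!\,e_{n-j}$), the polynomial is multiaffine in the $\tilde x_i$ so the derivative isolates $j=k$, and every step is a stability preserver from Lemma \ref{basic}. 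What each approach buys: the paper's argument is shorter, needs no combinatorial identity beyond a binomial count, and delivers all $k\le m$ simultaneously from one stable polynomial in $t$; yours treats each $k$ directly, avoids the translation step and Lemma \ref{coeffs} in favour of differentiation, and yields an explicit stable permanent in which all the sub-permanents $\per_j(B(A))$, $k\le j\le m$, are exhibited at once -- at the cost of heavier bookkeeping.
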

\begin{proof}
Using (\ref{dual2}), if necessary, we may assume that $m\leq n$.

We begin by showing that $\per_m(B(A))$ is stable.
Let $A'$ be the $n$-by-$n$ Ferrers matrix obtained by concatenating $n-m$ rows
of $0$'s to the bottom of $A$.
One checks that
$$
\per(B(A'))= (n-m)!\,x_n x_{n-1}\cdots x_{m+1} \cdot \per_m(B(A)).
$$
By Theorem \ref{perBA-stable}, $\per(B(A'))$ is stable, and it follows
easily that $\per_m(B(A))$ is stable.

Now, let $J_{m,n}$ be the $m$-by-$n$ matrix of all 1's.  Then
$$
\per_m(B(A)+tJ_{m,n})
= \sum_{k=0}^m \per_k(B(A)) \binom{n-k}{m-k}(m-k)! t^{m-k}.
$$
By Lemma \ref{basic}(c,f), this polynomial is stable.
Extracting the coefficient of $t^{m-k}$ from this, and dividing by
$ \binom{n-k}{m-k}(m-k)!$, Lemma \ref{coeffs} shows that $\per_k(B(A))$
is stable for all $0\leq k\leq m$.
\end{proof}

Proposition \ref{m-to-k} suggests the idea of a similar generalization of
the MMCPT:\ is it true that $\per_k(J_{m,n}Z_n+A)$ is stable for every
$m$-by-$n$ monotone column matrix $A$ and $k\leq\min\{m,n\}$?
This conjecture originates in \cite{HV}.  One cannot
derive this from Proposition \ref{m-to-k}, however, because there is no
analogue of (\ref{z-to-y}) for $k$-permanents.  Nonetheless, we can prove
this result for half the cases.

\begin{PROP}
Let $A$ be an $m$-by-$n$ monotone column matrix with $m\geq n$, and let
$k\leq n$.  Then $\per_k(J_{m,n}Z_n+A)$ is stable.
\end{PROP}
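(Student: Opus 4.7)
My plan is to reduce the proposition to the square MMCPT by padding $A$ with columns of zeros (which handles the top case $k = n$) and then cascade down to all smaller $k$ by a generating-function argument in an auxiliary variable $t$.

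For the case $k = n$, since $m \geq n$, I would append $m - n$ columns of zeros to the right of $A$ to form an $m \times m$ matrix $A^+$; a zero column is trivially weakly decreasing, so $A^+$ is still monotone column. Let $Z^+_m = \diag(z_1, \ldots, z_m)$. By the MMCPT, $\per(J_m Z^+_m + A^+)$ is stable in $\RR[z_1, \ldots, z_m]$. Grouping $\sigma \in \S_m$ in the permanental expansion by $R_\sigma := \sigma^{-1}(\{1, \ldots, n\})$, observe that $\sigma|_{R_\sigma}$ is a bijection $R_\sigma \to \{1, \ldots, n\}$, while $\sigma|_{\{1,\ldots,m\} \setminus R_\sigma}$ ranges over the $(m-n)!$ bijections to $\{n+1, \ldots, m\}$, each contributing the same factor $z_{n+1} z_{n+2} \cdots z_m$ (because every entry of $A^+$ in columns $n+1, \ldots, m$ is zero). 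This collapses to the identity
\[
\per(J_m Z^+_m + A^+) \;=\; (m-n)!\, z_{n+1} z_{n+2} \cdots z_m \cdot \per_n(J_{m,n} Z_n + A).
\]
A hypothetical zero of $\per_n(J_{m,n} Z_n + A)$ at $(w_1, \ldots, w_n) \in \HH^n$ extends (by arbitrary $w_{n+1}, \ldots, w_m \in \HH$) to a zero of the stable left side, a contradiction. Hence $\per_n(J_{m,n} Z_n + A)$ is stable.

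For general $k \leq n$, I would use the inclusion-exclusion identity
\[
\per_n(H + t J_{m,n}) \;=\; \sum_{j=0}^{n} \binom{m-j}{n-j}(n-j)!\, t^{n-j}\, \per_j(H),
\]
valid for any $m \times n$ matrix $H$ (exactly analogous to the expansion used in the proof of Proposition \ref{m-to-k}: expand each $(h_{ij}+t)$-factor and count extensions of $j$-subset bijections to $n$-subset bijections). Specializing $H = J_{m,n} Z_n + A$, the left side equals $\per_n\bigl(J_{m,n}\diag(z_1 + t, \ldots, z_n + t) + A\bigr)$, which one builds from the now-stable $\per_n(J_{m,n} Z_n + A)$ by translating each $z_j$ with its own auxiliary variable $t_j$ (Lemma \ref{basic}(f), applied $n$ times) and then diagonalizing $t_1 = \cdots = t_n = t$ (Lemma \ref{basic}(c)). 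Both operations preserve stability, so the left side is stable in $(z_1, \ldots, z_n, t)$. Lemma \ref{coeffs} then extracts stability of each coefficient of $t^{n-k}$, giving that $\per_k(J_{m,n} Z_n + A)$ is stable for every $0 \leq k \leq n$.

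The main obstacle, if any, is verifying the combinatorial identity in the first step: it is routine but requires careful bookkeeping of the zero columns, the bijection on $R_\sigma$, and the $(m-n)!$ factor. Once this identity is in hand, the square MMCPT carries all the deep content, and the generating-function step distributes stability across the remaining values of $k$ essentially for free.
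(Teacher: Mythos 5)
Your proposal is correct and follows essentially the same route as the paper: pad $A$ with $m-n$ zero columns to reduce $k=n$ to the square MMCPT via the identity $\per(J_mZ_m+A') = (m-n)!\,z_{n+1}\cdots z_m\cdot\per_n(J_{m,n}Z_n+A)$, then obtain the smaller $k$ by translating by $tJ_{m,n}$ (Lemma \ref{basic}(c,f)) and extracting coefficients of $t^{n-k}$ with Lemma \ref{coeffs}. The only difference is that you spell out the bookkeeping (the zero-of-the-product argument and the binomial identity) that the paper leaves implicit.
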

\begin{proof}
Let $A'$ be the $m$-by-$m$ matrix obtained from $A$ by concatenating
$m-n$ zero columns to the right of $A$.  Then
$$
\per(J_mZ_m+A') = (m-n)!z_m z_{m-1} \cdots z_{n+1}\cdot \per_n(J_{m,n}Z_n+A).
$$
Since $\per(J_mZ_m+A')$ is stable, it follows that $\per_n(J_{m,n}Z_n+A)$
is stable.  By Lemma \ref{basic}(c,f), it follows that
$\per_n(J_{m,n}Z_n+A+tJ_{m,n})$ is stable.
Extracting the coefficient of $t^{n-k}$ from this, and dividing by
$ \binom{m-k}{n-k}(n-k)!$, Lemma \ref{coeffs} shows that $\per_k(J_{m,n}Z_n+A)$
is stable for all $0\leq k\leq n$.
\end{proof}

\subsection{A cycle-counting generalization.}

Theorem \ref{perBA-stable} can be generalized in another direction, as follows.

For each permutation $\sigma\in\S_n$, let $\cyc(\sigma)$ denote the number of
cycles of $\sigma$.  For an indeterminate $\alpha$ and an
$n$-by-$n$ matrix $H=(h_{ij})$, the \emph{$\alpha$-permanent} of $H$ is
$$
\per(H;\alpha) = \sum_{\sigma\in\S_n} \alpha^{\cyc(\sigma)}
\prod_{i=1}^n h_{i,\sigma(i)}.
$$

The numbers $\cyc(\sigma)$ behave well with respect to the duality
$A\mapsto A^\vee$:\ for any Ferrers matrix $A$,
\begin{eqnarray} \label{dual3}
\per(B(A^\vee;\x;\y);\alpha) = \per(B(A;\y;\x);\alpha).
\end{eqnarray}

\begin{LMA} 
Let $A=(a_{ij})$ be an $n$-by-$n$ Ferrers matrix with $a_{nn}=0$,
let $k\geq 1$ be the number of $0$'s in the last column of $A$,
and let $A^\circ$ be the matrix obtained from $A$ by deleting the
last column and the last row of $A$.  Then
$$
\per(B(A);\alpha) = (\alpha + k-1) x_n\, \per(B(A^\circ);\alpha)
+ x_n y_n\, \partial \per(B(A^\circ);\alpha),
$$
with $\partial$ as in Lemma \ref{diff-reln}.
\end{LMA}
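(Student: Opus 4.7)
The plan is to run the combinatorial argument of Lemma \ref{diff-reln} verbatim, but to additionally track how the number of cycles changes under the bijection $\sigma \mapsto \pi(\sigma)$ introduced there. The key observation, which I would make explicit at the outset, is that $\pi$ acts on the cycle structure of $\sigma$ in exactly one of two ways: if $\sigma(n)=n$ it deletes the fixed point $n$, and otherwise it short-circuits the (length $\geq 2$) cycle of $\sigma$ through $n$ by replacing the arc $i_\sigma \mapsto n \mapsto j_\sigma$ with $i_\sigma \mapsto j_\sigma$. Consequently $\cyc(\pi(\sigma)) = \cyc(\sigma) - 1$ in the first case and $\cyc(\pi(\sigma)) = \cyc(\sigma)$ in the second.

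Next, I would partition the terms of $\per(B(A);\alpha)$ just as in Lemma \ref{diff-reln}: the set $\C_n$ of permutations with $\sigma(n)=n$, the sets $\C_i$ for $n-k+1 \le i \le n-1$ (the remaining permutations that contribute $x_n$ from the last column, using again that the last row of $A$ is zero and that $a_{i,\sigma(n)} = 0$ for these $i$), and the set $\D$ of permutations contributing $y_n$ from the last column. On $\C_n$ the cycle tally contributes an extra factor of $\alpha$, so the subtotal is $\alpha\, x_n\, \per(B(A^\circ);\alpha)$. On each $\C_i$ with $n-k+1 \le i \le n-1$ the cycle count is preserved by $\pi$, so each such piece contributes $x_n\, \per(B(A^\circ);\alpha)$; summing over the $k-1$ indices yields $(k-1)\, x_n\, \per(B(A^\circ);\alpha)$. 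The total contribution from the $\C$'s is therefore $(\alpha + k-1)\, x_n\, \per(B(A^\circ);\alpha)$.

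For $\sigma \in \D$ the $(n-k)$-to-one fibration over $\S_{n-1}$ and the identity $T_\sigma = x_n y_n\, T_{\pi(\sigma)}/v_\sigma$ from the proof of Lemma \ref{diff-reln} apply without change, and since $\cyc(\sigma) = \cyc(\pi(\sigma))$ here the $\alpha$-weight rides along trivially; the weighted sum therefore produces $x_n y_n\, \partial\, \per(B(A^\circ);\alpha)$ with the same differential operator $\partial$ as in Lemma \ref{diff-reln}. Adding the three contributions yields the stated recurrence. I expect no essentially new difficulty: the only step requiring attention is the bookkeeping for $\cyc(\pi(\sigma))$, and that is transparent from the cycle-theoretic description of $\pi$ above.
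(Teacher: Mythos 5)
Your proposal is correct and follows essentially the same route as the paper: it reruns the decomposition of Lemma \ref{diff-reln} into $\C_n$, the remaining $\C_i$, and $\D$, and tracks cycles via exactly the observation the paper uses, namely $\cyc(\sigma)=1+\cyc(\pi(\sigma))$ when $\sigma(n)=n$ and $\cyc(\sigma)=\cyc(\pi(\sigma))$ otherwise. Your explicit cycle-theoretic description of $\pi$ (deleting the fixed point $n$ versus short-circuiting the cycle through $n$) is a nice way of making that bookkeeping transparent.
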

\begin{proof}
Adopt the notation of the proof of Lemma \ref{diff-reln}.  To obtain
the present result, observe that if $T_\sigma$ is in $\C_n$ then
$\cyc(\sigma) = 1 + \cyc(\pi(\sigma))$, and otherwise
$\cyc(\sigma)=\cyc(\pi(\sigma))$.
\end{proof}

\begin{PROP} \label{alpha}
For $\alpha> 0$ and $A$ a Ferrers matrix, $\per(B(A);\alpha)$ is stable.
\end{PROP}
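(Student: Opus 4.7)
The plan is to run essentially the same induction that proved Theorem \ref{perBA-stable}, with the $\alpha$-weighted recurrence in place of the one from Lemma \ref{diff-reln}. Induction is on $n$. For the base case $n=1$ we have $\per(B(A);\alpha)=\alpha x_1$ or $\alpha y_1$, both stable since $\alpha>0$. For the induction step, by the duality identity \eqref{dual3} we may, after replacing $A$ by $A^{\vee}$ if necessary, assume that $a_{nn}=0$, so that the preceding lemma applies with some $k\geq 1$. The polynomial $\per(B(A^\circ);\alpha)$ is multiaffine in $\x\cup\y$ (each $x_i$ occurs only in row $i$ and each $y_j$ only in column $j$ of $B(A^\circ)$) and is stable by the inductive hypothesis.

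It therefore suffices to show that for $k\geq 1$ and $\alpha>0$, the linear operator
\[
T \;=\; (\alpha+k-1) \;+\; y_n\!\left(\sum_{i=1}^{n-k}\frac{\partial}{\partial x_i}+\sum_{j=1}^{n-1}\frac{\partial}{\partial y_j}\right)
\]
sends stable multiaffine polynomials to stable polynomials, for then the recurrence
\[
\per(B(A);\alpha)\;=\;x_n\bigl[(\alpha+k-1)\per(B(A^\circ);\alpha)+y_n\,\partial\per(B(A^\circ);\alpha)\bigr]
\]
exhibits $\per(B(A);\alpha)$ as $x_n$ times a stable polynomial, hence stable.

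To verify the multiaffine stability preservation I would rename variables to write $T=(\alpha+k-1)+z_m\sum_{j=1}^{m-1}\partial/\partial z_j$ on $\CC[z_1,\ldots,z_m]^{\ma}$, and apply the Borcea--Br\"and\'en criterion, Proposition \ref{BB}(b). The symbol polynomial is
\[
G_T(\z,\w)\;=\;T\prod_{j=1}^{m}(z_j+w_j)\;=\;\left((\alpha+k-1)+z_m\sum_{j=1}^{m-1}\frac{1}{z_j+w_j}\right)\prod_{j=1}^{m}(z_j+w_j).
\]
Suppose $z_j,w_j\in\HH$ for every $j$. Then each factor $z_j+w_j$ is nonzero, and
\[
\xi\;=\;\frac{\alpha+k-1}{z_m}\;+\;\sum_{j=1}^{m-1}\frac{1}{z_j+w_j}
\]
is a sum of numbers with strictly negative imaginary part (here I use that $\alpha+k-1>0$, which is where the hypotheses $\alpha>0$ and $k\geq 1$ are both used), so $\xi\neq 0$ and hence $z_m\xi\neq 0$. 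Therefore $G_T(\z,\w)\neq 0$, so $G_T$ is stable, and by Proposition \ref{BB} the operator $T$ preserves stability of multiaffine polynomials. This closes the induction.

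The only potential obstacle is the sign of the constant $\alpha+k-1$ in $T$: the argument that $\xi$ has negative imaginary part requires $\alpha+k-1\geq 0$, and any weakening below this threshold would break the Gauss--Lucas style estimate. Fortunately the hypothesis $\alpha>0$ combined with $k\geq 1$ (guaranteed by $a_{nn}=0$) is exactly enough, so no additional work is needed.
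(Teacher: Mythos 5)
Your proof is correct and follows exactly the route of the paper, which simply reprises the induction for Theorem \ref{perBA-stable} with the $\alpha$-weighted recurrence and observes that the symbol argument for $T=(\alpha+k-1)+z_m\sum_{j=1}^{m-1}\partial/\partial z_j$ goes through because $\alpha+k-1>0$ when $\alpha>0$ and $k\geq 1$. You have merely written out the details that the paper leaves implicit, so nothing needs to change.
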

\begin{proof}
Reprising the proof of Theorem \ref{perBA-stable}, it suffices to show
that an operator of the form
$$
T=(\alpha+k-1)+z_{m}\sum_{i=1}^{m-1} \partial/\partial z_i
$$
preserves stability when $k\geq 1$.  The argument of the proof of Theorem \ref{perBA-stable} works
when $\alpha> 0$.
\end{proof}

For $\alpha> 0$ and $A$ a Ferrers matrix, specialize all $x_i=1$ and
diagonalize all $y_j=z$ in $\per(B(A);\alpha)$.  By Lemma \ref{basic}(c,d),
the result is a (univariate) polynomial with only real roots.  This special
case is also implied by Theorem 2.5 of \cite{H}.

%

\subsection{Multivariate stable Eulerian polynomials.}

Given a permutation $\sigma \in \S_n$, viewed as a linear sequence
$\sigma (1) \sigma (2) \cdots \sigma (n)$, 
let $\text{L}(\sigma)$ denote the result of the following procedure.  First  
form the two-line array 
\showon
\text{T}(\sigma) = \left (
\begin{matrix}
1 & 2 & \cdots & n \\
\sigma (1) & \sigma (2) & \cdots & \sigma (n)
\end{matrix}
\right ).
\showoff
Then, viewing $\text{T}(\sigma)$ as a map sending $i$ to $\sigma (i)$,
break $\text{T}(\sigma)$ into cycles, 
with the smallest element of each cycle at the end, and 
cycles listed left-to-right with smallest elements increasing.
Finally, erase the parentheses delimiting the cycles to form
a new linear sequence $\text{L}(\sigma)$.  For example, 
if $\sigma=341526978$, then $\text{T}(\sigma)$ has cycle decomposition
$(31)(452)(6)(987)$ and $\text{L}(\sigma)=314526987$.   

Let $\text{P}(\sigma)$ denote the placement of $n$ nonattacking rooks on
the squares $(i,\sigma (i))$, $1\le i \le n$.
As noted by Riordan \cite{Rio}, rooks in $\text{P}(\sigma)$ that occur above the
diagonal correspond to 
exceedences (values of $i$ for which $\sigma (i) > i$) in $\sigma$ and descents 
(values of $i$ for which $\sigma (i) > \sigma (i+1)$) in $\text{L}(\sigma)$.  
Hence 
\begin{eqnarray}
\label{Eulerian}
\sum_{\sigma \in \S_n} 
z^{\text{exc}(\sigma)} =
\sum_{\sigma \in \S_n} 
z^{\text{des}(\sigma)},
\end{eqnarray}
where $\text{exc}(\sigma)$ is the number of exceedences and
$\text{des}(\sigma)$ the number of descents
of $\sigma$.

The polynomial in (\ref{Eulerian}) is known as the {\it Eulerian polynomial}.
It is one of the classic examples in
combinatorics of a polynomial with only real roots.
Let $E_n=(e_{ij})$ denote the $n$-by-$n$ matrix with $e_{ij}=0$ if $i\geq j$
and $e_{ij}=1$ if $i<j$.  Then $\per(B(E_n;\x;\y))$ is stable by Theorem
\ref{perBA-stable}. Let $\one$ be the vector (of appropriate size) of all ones. 
From the above discussion, $\per(B(E_n;\one;z,\ldots,z))$ equals the
Eulerian polynomial; by Lemma \ref{basic}(c,d) this is a univariate stable
polynomial with real coefficients, so it has only real roots.
Similarly, we see that
\begin{align}
\label{BBB}
\per(B(E_n;\one;\y)) &= 
\sum_{\sigma \in \S_n} 
\prod _{ \sigma(i) > \sigma(i+1)} y_{\sigma(i)} \\ &=
\sum_{\sigma \in \S_n} 
\prod _{ \sigma(i) > i} y_{\sigma(i)}
\end{align}
is stable in $\{y_1,y_2,\ldots ,y_{n}\}$ (but $y_1$ does not really occur).  

Letting $f=\per(B(E_n;\one;\y))$, note that the partial of $f$ with respect to $y_i$,
evaluated at all $y$-variables equal to $1$, equals the number of permutations
in $\S_n$ that have $i$ as a ``descent top", i.e. have the property that
$i$ is followed immediately
by something less than $i$.  Denoting this number by $\text{Top}(i;n)$,  
applying Proposition \ref{Delta} to $f$ we get
\begin{align}
\text{Top}(i;n)\text{Top}(j;n) \ge n! \, \text{Top}(i,j;n),
\end{align}
where $\text{Top}(i,j;n)$ is the number of permutations in $\S_n$ 
having both $i$ and $j$ as descent
tops, with $2\le i<j\le n$.  Dividing both sides of the above equation by $n!^2$ shows that
occurrences of descent tops in a uniformly random permutation are negatively correlated.

More general forms of (\ref{BBB}) can be defined which still maintain stability.  
First of all,
cycles in $\text{T}(\sigma)$
clearly translate into left-to-right minima in $\text{L}(\sigma)$, and so by Proposition
\ref{alpha} the polynomial 
\showon
\sum_{\sigma \in \S_n} 
\alpha ^{\text{LRmin}(\sigma)}
\prod _{\sigma(i) > \sigma(i+1)} y_{\sigma(i)} 
\showoff
is stable in the $\y$ for $\alpha>0$, with $\text{LRmin}(\sigma)$
denoting the number of left-to-right minima of $\sigma$.

Secondly, the sum in (\ref{BBB}) can be replaced by a sum over permutations of a
multiset.  For a given vector $\v =(v_1,\ldots ,v_t) \in \mathbb N ^t$, 
let $N(\v) = \{1^{v_1}2^{v_2}\cdots t^{v_t}\}$ denote the multiset
with $v_i$ copies of $i$, and $\M(\v)$ the set of multiset permutations
of $N(\v)$ (so for example $\M(2,1) = \{112,121,211\}$).
Riordan \cite{Rio} noted that if we map our previous sequence $L(\sigma)$ to a multiset permutation 
$m(\sigma)$ by replacing numbers $1$ through $v_1$ in $L(\sigma)$ by $1$'s, 
numbers $v_1+1$ through $v_1+v_2$ by $2$'s, etc., we get a $\prod v_i!$ to $1$ map, 
and furthermore certain squares above the diagonal where rooks in $P(\sigma)$ 
correspond to descents in $L(\sigma)$ no longer correspond to descents in  
$m(\sigma)$. For example, if $v_1=2$, then $1$ and $2$ in $L(\sigma)$ both get mapped 
to $1$ in $m(\sigma)$, so a rook on square $(1,2)$ no longer corresponds to a descent.  

Let $n$ denote the sum of the coordinates of $\v$, and let 
$Y(\v)$ be the sequence of variables obtained by starting with 
$y_1,\ldots ,y_n$ and setting the first $v_1$ $y$-variables equal to $y_1$, 
the next $v_2$ $y$-variables equal to $y_2$, etc. Then if $E(\v)$ is the 
Ferrers matrix whose first $v_1$ columns are all zeros, the next $v_2$ columns have 
ones in the top $v_1$ rows and zeros below, the next $v_3$ columns have ones in the 
top $v_1+v_2$ rows and zeros below, etc., an easy extension of the argument 
above implies 
\begin{align}
\label{MBBB}
(1/\prod _i v_i !) \text{per}(   B(  E(\v;\one;Y(\v))  )   )
=\sum_{\sigma \in \M(\v)} 
\prod _{ \sigma(i) > \sigma(i+1)} y_{\sigma(i)}
\end{align}
is stable in the $y_i$. 
This contains Simion's result \cite{Sim}, that the multiset Eulerian polynomial 
has only real roots.  If $\v$ has all ones, i.e. $N(\v)$ is a set, it reduces to our 
previous result.  Finally, we note that this argument also shows that if we replace the
condition $\sigma (i) > \sigma (i+1)$ by the more general condition 
$\sigma (i) > \sigma (i+1)+j-1$, for any fixed positive integer $j$, we still get
stability.

\subsection{Grace's Apolarity Theorem.}

Univariate complex polynomials $f(t)=\sum_{k=0}^n \binom{n}{k} a_k t^k$ and
$g(t)=\sum_{k=0}^n \binom{n}{k} b_k t^k$ are \emph{apolar} if $a_nb_n\neq 0$
and
$$
\sum_{k=0}^n \binom{n}{k} (-1)^{n-k} a_k b_{n-k} = 0. 
$$

\begin{LMA} \label{per-wz}
Let $f(t)=\sum_{k=0}^n \binom{n}{k} a_k t^k$ and
$g(t)=\sum_{k=0}^n \binom{n}{k} b_k t^k$ be complex polynomials of degree $n$.
Let the roots of $f(t)$ be $z_1,\ldots,z_n$ and let the roots of $g(t)$ be
$w_1,\ldots,w_n$.  Then
$$
\sum_{k=0}^n \binom{n}{k} (-1)^{n-k} a_k b_{n-k} = n!\,a_n b_n\, \per(w_i-z_j). 
$$
In particular, $f(t)$ and $g(t)$ are apolar if and only if $\per(w_i-z_j)=0$.
\end{LMA}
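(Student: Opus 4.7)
The plan is to evaluate both sides of the claimed identity by expanding them in terms of the elementary symmetric polynomials $e_k(z_1,\ldots,z_n)$ and $e_k(w_1,\ldots,w_n)$ of the two root systems, and then compare the resulting bilinear forms.

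First, since $f(t)=a_n\prod_{j=1}^n(t-z_j)$ must agree with $f(t)=\sum_{k=0}^n\binom{n}{k}a_k t^k$, comparing coefficients of $t^k$ yields
$$\binom{n}{k}\,a_k \;=\; a_n\,(-1)^{n-k}\,e_{n-k}(z_1,\ldots,z_n),$$
and by the same token $\binom{n}{k}\,b_{n-k} = b_n\,(-1)^k\,e_k(w_1,\ldots,w_n)$. Substituting these two formulas into the left-hand side $\sum_k\binom{n}{k}(-1)^{n-k}a_k b_{n-k}$ makes the binomial coefficients collapse, and the sum becomes an explicit bilinear expression in $a_n b_n$ and the products $e_{n-k}(z)\,e_k(w)$ with rational constants depending only on $n$ and $k$.

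Next, I would compute $\per(w_i-z_j)$ directly from the permutation definition. In each term $\prod_{i=1}^n(w_i-z_{\sigma(i)})$ of the expansion, index the binomial expansion of each factor by a subset $S\subseteq\{1,\ldots,n\}$ recording the positions from which $w_i$ is extracted (the complementary positions contributing $-z_{\sigma(i)}$), and exchange the two summations. For each fixed $S$ of size $k$, the inner sum $\sum_{\sigma\in\S_n}\prod_{i\notin S}z_{\sigma(i)}$ depends only on the unordered image $T=\sigma(S^c)$, and each $(n-k)$-subset $T\subseteq\{1,\ldots,n\}$ appears with multiplicity $k!(n-k)!$ (the bijections $S\to T^c$ and $S^c\to T$ being independent). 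Assembling these contributions gives
$$\per(w_i-z_j)\;=\;\sum_{k=0}^n(-1)^{n-k}\,k!(n-k)!\,e_k(w)\,e_{n-k}(z).$$

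Matching the two bilinear expansions coefficient by coefficient in $e_{n-k}(z)\,e_k(w)$ then yields the asserted proportionality, with the constant $n!\,a_n b_n$ falling out from the identity $\binom{n}{k}\cdot k!(n-k)!=n!$ that cancels the $1/\binom{n}{k}$ factors produced when inverting the coefficient formulas. The ``in particular'' claim is then immediate from the definition of apolarity: the hypothesis $a_n b_n\neq 0$ forces the apolarity sum to vanish exactly when $\per(w_i-z_j)$ does. The only mildly combinatorial point in the whole argument is the permanent expansion above -- specifically the observation that once the $\sigma$- and $S$-sums are exchanged, only the unordered image $\sigma(S^c)$ controls the $z$-monomial, which is what produces the elementary symmetric polynomial with the clean $k!(n-k)!$ multiplicity; everything else is routine bookkeeping.
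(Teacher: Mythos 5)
Your argument is essentially the paper's: the paper likewise expands $\per(w_i-z_j)$ over permutations and choices of which factor contributes $w_i$ versus $-z_{\sigma(i)}$, obtains the same $k!(n-k)!$ multiplicity for each pair consisting of a $k$-subset of rows and an $(n-k)$-subset of columns, and translates the coefficients $a_k,b_k$ into elementary symmetric functions of the roots (the paper first reduces to monic $f,g$; you keep $a_n,b_n$ explicit, which is fine and arguably cleaner).

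One caveat about your final ``matching'' step: your own (correct) intermediate identities do not produce the constant $n!\,a_nb_n$. From $\binom{n}{k}a_k=a_n(-1)^{n-k}e_{n-k}(z)$ and $\binom{n}{k}b_{n-k}=b_n(-1)^{k}e_k(w)$ one gets $\sum_k\binom{n}{k}(-1)^{n-k}a_kb_{n-k}=a_nb_n\sum_k(-1)^{k}e_{n-k}(z)e_k(w)\big/\binom{n}{k}$, while your permanent expansion reads $\per(w_i-z_j)=\sum_k(-1)^{n-k}k!(n-k)!\,e_k(w)e_{n-k}(z)$; comparing the two gives the proportionality constant $(-1)^n a_nb_n/n!$, not $n!\,a_nb_n$. (Check $n=1$: the left side is $a_1b_0-a_0b_1$, whereas $a_1b_1(w_1-z_1)=a_0b_1-a_1b_0$.) So the asserted cancellation ``falling out from $\binom{n}{k}k!(n-k)!=n!$'' is not right as bookkeeping -- in fact the normalization printed in the lemma, and glossed over in the paper's own terse proof, is off by exactly that factor and sign. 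None of this affects what matters downstream: the apolarity sum is a nonzero constant multiple of $\per(w_i-z_j)$, so the ``in particular'' criterion, and hence the proof of Grace's theorem, is untouched. Just state the constant your computation actually yields rather than the one in the displayed identity.
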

\begin{proof}
It suffices to prove this for monic polynomials $f(t)$ and $g(t)$.
For each permutation $\sigma\in\S_n$ there are $2^n$ terms in $\per(w_i-z_j)$,
since for each $1\leq i\leq n$ either $w_i$ or $-z_{\sigma(i)}$ can be chosen.
For each subset $R$ of rows of size $k$, and subset $C$ of columns of size
$n-k$, the monomial $\w^R\z^C$ is produced $k!(n-k)!$ times.
Since $(-1)^{k}a_{n-k}$ is the $k$-th elementary symmetric function of
$\{z_1,\ldots,z_n\}$, and similarly for $(-1)^{k}b_{n-k}$ and $\{w_1,\ldots,w_n\}$,
the result follows.
\end{proof}

\begin{LMA} \label{mobius}
Let $f(t)$ and $g(t)$ be polynomials of degree $n$.
Let $t\mapsto\phi(t)=(at+b)/(ct+d)$ be a M\"obius transformation,
with inverse $\phi^{-1}(t)=(\alpha t+\beta)/(\gamma t + \delta)$.
Let $\widehat{f}(t)=(\gamma t+\delta)^n f(\phi^{-1}(t))$ and
$\widehat{g}(t)=(\gamma t+\delta)^n g(\phi^{-1}(t))$ have degree $n$.
Then $\widehat{f}(t)$ and $\widehat{g}(t)$ are apolar if and only if
$f(t)$ and $g(t)$ are apolar.
\end{LMA}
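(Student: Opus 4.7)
The plan is to translate both apolarity statements via Lemma \ref{per-wz} into vanishing conditions on permanents of root-difference matrices, and then to show that the two permanents differ only by a nonzero scalar when the roots are transformed by $\phi$. The algebraic engine is the M\"obius difference identity
\[
\phi(w) - \phi(z) \;=\; \frac{(ad-bc)(w-z)}{(cw+d)(cz+d)}.
\]
First I would identify the roots of $\widehat{f}$ (and analogously of $\widehat{g}$). Writing $f(t)=a_n\prod_{j=1}^n (t-z_j)$, substituting $t \mapsto \phi^{-1}(t)$ and clearing the denominator by the prefactor $(\gamma t+\delta)^n$ yields
\[
\widehat{f}(t) \;=\; a_n \prod_{j=1}^n \bigl((\alpha-z_j\gamma)t + (\beta-z_j\delta)\bigr),
\]
so the roots of $\widehat{f}$ are exactly $\phi(z_1),\ldots,\phi(z_n)$; the hypothesis that $\widehat{f}$ has degree $n$ forces $\alpha-z_j\gamma\neq 0$ for every $j$, i.e.\ every $\phi(z_j)$ is finite.

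Next I would apply row- and column-multilinearity of the permanent to the matrix $\bigl(\phi(w_i)-\phi(z_j)\bigr)_{i,j}$, pulling the scalar $(ad-bc)/(cw_i+d)$ out of row $i$ and the scalar $1/(cz_j+d)$ out of column $j$. This gives
\[
\per\bigl(\phi(w_i)-\phi(z_j)\bigr) \;=\; \frac{(ad-bc)^n}{\prod_i (cw_i+d)\,\prod_j (cz_j+d)}\cdot\per(w_i-z_j).
\]
The prefactor on the right is nonzero: $ad-bc\neq 0$ because $\phi$ is a M\"obius transformation, and the factors $cw_i+d$ and $cz_j+d$ are nonzero by the degree hypotheses on $\widehat{g}$ and $\widehat{f}$ respectively. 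Hence the two permanents vanish simultaneously, and invoking Lemma \ref{per-wz} on each side closes the equivalence.

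There is no deep obstacle, only bookkeeping: the degree-$n$ hypotheses on $\widehat{f}$ and $\widehat{g}$ must be used precisely to rule out any $z_j$ or $w_i$ being a pole of $\phi$, and the row and column scalars extracted from the permanent must be tracked carefully. Once the M\"obius difference identity and the correct identification of roots are in hand, the proof collapses into essentially a single line of multilinearity of the permanent.
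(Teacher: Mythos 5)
Your proposal is correct and follows essentially the same route as the paper: identify the roots of $\widehat{f}$ and $\widehat{g}$ as $\phi(z_j)$ and $\phi(w_i)$, use the identity $\phi(w)-\phi(z)=(ad-bc)(w-z)/\bigl((cw+d)(cz+d)\bigr)$ to factor row and column scalars out of $\per\bigl(\phi(w_i)-\phi(z_j)\bigr)$, and conclude via Lemma \ref{per-wz} since the prefactor is nonzero. Your extra remark that the degree-$n$ hypotheses rule out any root being a pole of $\phi$ is a welcome piece of bookkeeping that the paper leaves implicit.
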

\begin{proof}
Let the roots of $f(t)$ be $z_1,\ldots,z_n$ and let the roots of $g(t)$
be $w_1,\ldots,w_n$.  Then the roots of $\widehat{f}(t)$ are
$\phi(z_1),\ldots,\phi(z_n)$ and the roots of $\widehat{g}(t)$ are
$\phi(w_1),\ldots,\phi(w_n)$.  Consider the permanent $\per(\phi(w_i)-\phi(z_j))$.
The $(i,j)$-entry of this matrix is
$$
\frac{aw_i+b}{cw_i+d}-\frac{az_j+b}{cz_j+d}=\frac{(ad-bc)(w_i-z_j)}{(cw_i+d)(cz_j+d)}.
$$
Factor $(cw_i+d)^{-1}$ out of row $i$, factor $(cz_j+d)^{-1}$ out of column
$j$, and factor $ad-bc$ out of every row.  Therefore
$$
\per(\phi(w_i)-\phi(z_j)) = \frac{(ad-bc)^n}{\prod_{h=1}^n (cw_h+d)(cz_h+d)}
\cdot \per(w_i-z_j).
$$
Since the prefactor on the right-hand side is neither zero nor infinite,
the result follows from Lemma \ref{per-wz}.
\end{proof}

A \emph{circular region} $\A$ is a proper subset of $\CC$ that is either
open or closed, and is bounded by either a circle or a straight line.

\begin{THM}[Grace's Apolarity Theorem] \label{grace}
Let $f(t)$ and $g(t)$ be apolar polynomials.  If every root of $g(t)$
is in a circular region $\A$, then $f(t)$ has at least one root in $\A$.
\end{THM}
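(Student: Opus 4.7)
The plan is to use Lemma~\ref{mobius} to reduce to the case $\A = \overline{\HH}$, use Lemma~\ref{per-wz} to rewrite apolarity as the vanishing of a permanent, and then contradict that vanishing by proving that the polynomial $\per(w_i+v_j)$ is real stable in all $2n$ variables. Concretely, choose a M\"obius transformation $\phi$ sending $\A$ onto $\overline{\HH}$ and replace $(f,g)$ with the apolar pair $(\hat f, \hat g)$ from Lemma~\ref{mobius}; the roots of $\hat g$ lie in $\overline{\HH}$, and $\hat f$ has a root in $\overline{\HH}$ iff $f$ has a root in $\A$, so we may assume $\A = \overline{\HH}$ from the start. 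Suppose for contradiction that $\hat f$ has no root in $\overline{\HH}$. Each root $z_j$ of $\hat f$ then satisfies $\Im(z_j) < 0$, so $v_j := -z_j \in \HH$, and by Lemma~\ref{per-wz} apolarity becomes
$$
\per(w_i - z_j) = \per(w_i + v_j) = 0,
$$
where the $w_i \in \overline{\HH}$ are the roots of $\hat g$.

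The key claim is that $P(\w,\v) := \per(w_i + v_j) \in \RR[\w,\v]$ is real stable in all $2n$ variables. Given this, iterating Lemma~\ref{basic}(d) to specialize each $w_i$ to its prescribed value in $\overline{\HH}$ preserves real stability in the remaining variables $\v$; the resulting polynomial in $\v$ has coefficient $n!\neq 0$ on the monomial $v_1 v_2 \cdots v_n$ and is therefore a nonzero real stable polynomial, which consequently cannot vanish at any $\v \in \HH^n$. This contradicts the displayed equation and proves Grace's theorem.

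It remains to establish real stability of $P$. Since $P$ is multiaffine in each of its $2n$ variables, by Proposition~\ref{Delta} real stability is equivalent to the pointwise bilinear inequality $P_i P_j - P P_{ij} \geq 0$ at every real point, for every pair of variables. For a pair of $\v$-variables, fix any real $\w$; after a row permutation (which leaves $\per$ unchanged) the entries of $\w$ are weakly decreasing, and the matrix $A = (w_i)_{ij}$ with constant columns is monotone column, so by the MMCPT the slice $P(\w,\cdot)$ is real stable in $\v$, and Proposition~\ref{Delta} applied to the slice delivers the required inequality at the chosen real point. The case of two $\w$-variables is symmetric. For a mixed pair $(w_a,v_b)$, expanding $P$ in these two variables multiaffinely as $P = \alpha + \beta w_a + \gamma v_b + \delta w_a v_b$ reduces the inequality to $\beta\gamma - \alpha\delta \geq 0$ at any real assignment of the remaining variables; a direct computation shows the discriminant is a sum of squares (for $n=2$ and the pair $(w_1,v_1)$ one finds $\beta\gamma - \alpha\delta = 2(w_2 + v_2)^2$). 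The main obstacle is proving the analogous sum-of-squares identity cleanly for general $n$: this is the combinatorial core of the argument and essentially encodes the Grace--Walsh--Szeg\H{o} coincidence theorem in permanent form.
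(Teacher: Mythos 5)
Your proposal follows the same skeleton as the paper (M\"obius reduction, Lemma~\ref{per-wz} to turn apolarity into $\per(w_i-z_j)=0$, stability of the permanent, specialization of the $w_i$, contradiction), but the core claim --- that $P(\w,\v)=\per(w_i+v_j)$ is real stable jointly in all $2n$ variables --- is left unproved. Your verification via Proposition~\ref{Delta} does work for pairs of $\v$-variables (fix $\w$ real, permute rows so the constant-column matrix is monotone, apply the MMCPT to the slice), and by transposition symmetry for pairs of $\w$-variables; but the mixed pairs $(w_a,v_b)$ are exactly where the content of the theorem sits, and you only check them for $n=2$, conceding that the general nonnegativity of $\beta\gamma-\alpha\delta$ is an unresolved ``combinatorial core.'' Since, as you yourself observe, that inequality essentially encodes Grace--Walsh--Szeg\H{o}, the one missing step is comparable in strength to the theorem you are proving, so the argument as written is a genuine gap rather than a proof. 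The paper gets around precisely this point by a different device: after the MMCPT gives real stability of $\per(w_i+z_j)$ in $\z$ for real (sorted) $w_i$, it regards $f(z)=\prod_i(z+w_i)\mapsto\per(w_i+z_j)$ as a linear operator in the coefficients of $f$ that preserves real stability, and invokes Proposition~\ref{R-to-C}; the diagonalization identity $T_\CC(f)(z,\ldots,z)=n!\,f(z)$ decides that $T_\CC(f)$ (not $\hat{T}_\CC(f)$) is the stable one, which yields stability of $\per(w_i+z_j)$ in $\CC[\w,\z]$ with no mixed-pair computation at all. To complete your route you would have to either prove the mixed Rayleigh inequality for general $n$ or import this operator argument, at which point you are reproducing the paper's proof.

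Two smaller defects in the reduction and bookkeeping. A M\"obius transformation is a homeomorphism of the Riemann sphere, so it maps open circular regions to open ones; you cannot send a general (e.g.\ open) region $\A$ onto $\overline{\HH}$ as stated. Either reduce to the open half-plane (the paper's choice, treating a closed region as an intersection of open ones), or first replace an open $\A$ by a closed circular subregion containing the finitely many roots of $g$; one of these extra steps is needed. Finally, after specializing the $w_i$ to non-real points of $\overline{\HH}$ the slice lies in $\CC[\v]$ and is stable, not \emph{real} stable; this is harmless --- your observation that the coefficient of $v_1\cdots v_n$ equals $n!$ correctly rules out the identically-zero case, so the non-vanishing on $\HH^n$ and the final contradiction go through --- but the wording should be corrected.
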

\begin{proof}
It suffices to prove this for monic polynomials $f(t)$ and $g(t)$, and
for open circular regions $\A$ since a closed circular region is the intersection
of all open circular regions which contain it.  Let $t\mapsto\phi(t)$ be
a M\"obius transformation that maps $\A$ to the upper half-plane $\HH$.
By Lemma \ref{mobius}, it suffices to prove this when the circular region
is $\HH$ itself.  Let the roots of $f(t)$ be
$z_1,\ldots,z_n$ and let the roots of $g(t)$ be $w_1,\ldots,w_n$.

If all of $w_1,\ldots,w_n$ are real numbers, then by permuting the rows of
$(w_i+z_j)$ we can assume that $w_1\leq\cdots\leq w_n$ without changing the
value of $\per(w_i+z_j)$.  By the MMCPT,
$\per(w_i+z_j)$ is a real stable polynomial in $z_1,\ldots,z_n$.
In other words, the transformation $T:\RR[z]\goesto\RR[\z]$ defined by
$$
T(f(z)) = \per(w_i+z_j), 
$$
where $f(z)=(z+w_1)(z+w_2)\cdots(z+w_n)$ 
preserves real stability.  This is a linear transformation.  Suppose that $f(z) \in \CC[z]$ is stable. By Proposition
\ref{R-to-C}, either $T_\CC(f(z))$ is stable or $\hat{T}_\CC(f(z))$ is stable.
Diagonalizing by setting $z_j=z$ for all $1\leq j\leq n$, we see that
$T_\CC(f)(z,\ldots,z)= n! f(z)$, so that $T_\CC(f(z))$ is stable.
Therefore $\per(w_i+z_j)$ is a stable polynomial
in $\CC[\z]$, for all $w_1,\ldots,w_n\in\HH$.  Therefore $\per(w_i+z_j)$
is a stable polynomial in $\CC[\w,\z]$. Actually it satisfies a stronger stability property. Namely if $z_j \in \overline{\HH}$ and 
$w_j \in \HH$ for all $1\leq j\leq n$, then $\per(w_i+z_j) \neq 0$. Indeed if we fix $\zeta_j \in \overline{\HH}$ for all $1\leq j\leq n$, then 
the polynomial $\per(w_i + \zeta_j) \in \CC[\w]$ is either identically zero or stable (and not identically zero) by Lemma \ref{basic} (d). Clearly  $\per(w_i + \zeta_j)$ is not identically zero.  
 
Now assume that $w_i\in\HH$ for all $1\leq i\leq n$.  Arguing for a
contradiction, assume that $z_j\not\in\HH$ for all $1\leq j\leq n$.
Then $-z_j\in\overline{\HH}$ for all $1\leq j\leq n$.  Hence  $\per(w_i-z_j)\neq 0$ by the argument given in the previous paragraph.  But $\per(w_i-z_j)=0$ by Lemma
\ref{per-wz}, since $f(t)$ and $g(t)$ are apolar.  This contradiction completes
the proof.
\end{proof}

Our proof of Grace's apolarity theorem relies on MMCPT. To avoid going in circles we should ensure ourselves that the 
proof of MMCPT does not use any form of Grace's apolarity theorem. In fact what we use in the proof of MMCPT is that condition (b) in Proposition \ref{BB} implies that the operator preserves stability, the proof of which does not use Grace's apolarity theorem. 

\subsection{Permanental inequalities}
If $A$ is a $n$-by-$n$ matrix and $S \subseteq [n]=\{1,\ldots,n\}$ let $A_S$ be the matrix obtained by replacing the columns indexed by $S$ by columns of all ones. 
\begin{CORO}\label{negass}
Let $A$ be an  $n$-by-$n$ monotone matrix, $S \subset [n]$,  and  $i,j \in [n]\setminus S$. Then 
$$
\per(A_{S\cup \{i\}})\per(A_{S\cup \{j\}}) \geq \per(A_{S\cup \{i,j\}}) \per(A_S). 
$$
\end{CORO}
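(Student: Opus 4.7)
The approach is to realize the four permanents in the claim as the coefficients of a bivariate polynomial obtained by specialization and coefficient extraction from the stable polynomial supplied by the MMCPT, and then to apply the Rayleigh--type inequality of Proposition \ref{Delta}. First I would expand
$$
f(\z) \;=\; \per(J_n Z_n + A) \;=\; \per(z_j + a_{ij}) \;=\; \sum_{C \subseteq [n]} \z^C\, \per(A_C), \qquad \z^C = \prod_{k \in C} z_k,
$$
which one obtains by expanding each factor $(z_{\sigma(i)} + a_{i\sigma(i)})$ in the permutation definition of the permanent and grouping the resulting terms by the set $C = \sigma(T)$ of columns that contribute a $z$. By the MMCPT this multiaffine polynomial is real stable.

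Next I would specialize by setting $z_k = w$ for every $k \in S$ (by iterating Lemma \ref{basic}(c)) and $z_k = 0$ for every $k \notin S \cup \{i,j\}$ (by Lemma \ref{basic}(d)); the resulting polynomial $\tilde f(w, z_i, z_j) \in \RR[w, z_i, z_j]$ is real stable and has degree at most $|S|$ in $w$. Only subsets $C \subseteq S \cup \{i,j\}$ contribute to $\tilde f$, and after the specialization $\z^C$ becomes $w^{|C \cap S|}$ times a monomial in $z_i, z_j$, so the coefficient of $w^{|S|}$ in $\tilde f$ collects exactly those $C$ with $S \subseteq C \subseteq S \cup \{i,j\}$, giving
$$
p(z_i, z_j) \;=\; \per(A_S) + \per(A_{S \cup \{i\}})\,z_i + \per(A_{S \cup \{j\}})\,z_j + \per(A_{S \cup \{i,j\}})\,z_i z_j.
$$
By Lemma \ref{coeffs}, either $p \equiv 0$---in which case all four permanents vanish and the inequality is $0 \geq 0$---or $p$ is (nonzero and) real stable.

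In the nontrivial case $p$ is multiaffine and real stable in $(z_i, z_j)$, so Proposition \ref{Delta} evaluated at $(z_i, z_j) = (0,0)$ gives
$$
p_i(0,0)\, p_j(0,0) \;-\; p(0,0)\, p_{ij}(0,0) \;\geq\; 0,
$$
which reads precisely as $\per(A_{S \cup \{i\}})\, \per(A_{S \cup \{j\}}) \geq \per(A_S)\, \per(A_{S \cup \{i,j\}})$, as claimed. I do not foresee a genuine obstacle: each step is either a routine expansion of the permanent or an immediate invocation of the Section 2 toolkit, and the only creative decision is the choice of specialization that places the four desired permanents in the top $w^{|S|}$-slot, where Lemma \ref{coeffs} delivers the stability needed to feed Proposition \ref{Delta}.
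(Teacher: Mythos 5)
Your proof is correct and takes essentially the same route as the paper: the paper's one-line argument applies Proposition \ref{Delta} to the real stable polynomial $\per(z_j+a_{ij})$ provided by the MMCPT, and your specialization/diagonalization together with Lemma \ref{coeffs} merely makes explicit how the four permanents appear as the relevant coefficients before invoking Proposition \ref{Delta} at the origin (with the degenerate case $p\equiv 0$ handled correctly).
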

\begin{proof}
The proof follows immediately from Proposition \ref{Delta} applied to $\per(z_j+a_{ij})$. 
\end{proof}
Note that Corollary \ref{negass} can be interpreted as that the permanent is pairwise negatively associated in columns. 

The \emph{generating polynomial} of a discrete measure $\mu : 2^{[n]} \rightarrow \RR_{\geq 0}$ is given by 
$$
G_\mu(\z)= \sum_{S \in 2^{[n]}} \mu(S) \prod_{j \in S}z_j.
$$
The measure $\mu$ is \emph{Rayleigh} if 
\begin{equation}\label{Ra}
\frac {\partial G_\mu}{\partial z_i}(\x)\frac {\partial G_\mu}{\partial z_j}(\x) \geq \frac {\partial^2 G_\mu}{\partial z_i\partial z_j}(\x)G_\mu(\x),
\end{equation}
for all $\x \in \RR_{\geq 0}^n$, and it is called \emph{strongly Rayleigh} if \eqref{Ra} holds for all $\x \in \RR^n$. We refer  
to \cite{BBL,W1} for more information on Rayleigh and  strongly Rayleigh measures. 

Suppose that $A$ is an  $n$-by-$n$ monotone matrix with nonnegative coefficients. Consider the 
discrete measure $\mu_A : 2^{[n]} \rightarrow \RR_{\geq 0}$ defined by $\mu_A(S) = \per(A_S)$. By Proposition \ref{Delta} we see that $\mu_A$ is strongly Rayleigh. This fact entails many inequalities. 
\begin{CORO}\label{nlc}
Suppose that $A$ is an  $n$-by-$n$ monotone matrix with nonnegative coefficients. Then 
$$
\per(A_S)\per(A_T) \geq \per(A_{S\cup T})\per(A_{S \cap T}), 
$$
for all $S, T \subseteq [n]$. 

Moreover
$$
\per(A) \leq s_1\cdots s_n \frac {n!}{n^n}, 
$$
where $s_i$ is the sum of the elements in the $i$th column. 
\end{CORO}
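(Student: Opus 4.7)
The plan is to derive both inequalities from the fact, just established, that the generating polynomial $G(\z)=\per(z_j+a_{ij})=\sum_{S\subseteq[n]}\mu_A(S)\z^S$ of $\mu_A$ is real stable, multiaffine, and has nonnegative coefficients, so that $\mu_A$ is strongly Rayleigh by Proposition~\ref{Delta}.

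For the first inequality I prove log-submodularity $\mu_A(S)\mu_A(T)\ge\mu_A(S\cup T)\mu_A(S\cap T)$ by induction on $|S\triangle T|$. The cases $|S\triangle T|\le 1$ give equality. When $|S\triangle T|=2$, write $S=R\cup\{i\}$ and $T=R\cup\{j\}$ with $R=S\cap T$; by Lemma~\ref{basic}(d,g) the bivariate polynomial
$$H(z_i,z_j):=\partial_R G\bigl|_{z_k=0,\ k\notin R\cup\{i,j\}}=\mu_A(R)+\mu_A(R\cup\{i\})z_i+\mu_A(R\cup\{j\})z_j+\mu_A(R\cup\{i,j\})z_iz_j$$
is real stable and multiaffine, and Proposition~\ref{Delta} at the origin yields the ``diamond'' inequality $\mu_A(R\cup\{i\})\mu_A(R\cup\{j\})\ge\mu_A(R)\mu_A(R\cup\{i,j\})$. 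For the inductive step with $|S\triangle T|\ge 3$, by symmetry I may assume $|S\setminus T|\ge 2$ and pick $i\in S\setminus T$; then both pairs $(S,T\cup\{i\})$ and $(T,(S\cap T)\cup\{i\})$ have strictly smaller symmetric difference than $(S,T)$. Applying the inductive hypothesis to each, multiplying the two inequalities, and cancelling the common factor $\mu_A(T\cup\{i\})\mu_A((S\cap T)\cup\{i\})$ yields the desired $\mu_A(S)\mu_A(T)\ge\mu_A(S\cup T)\mu_A(S\cap T)$; degenerate cases in which a cancelled factor vanishes are handled by a short case analysis (or by perturbing $A\mapsto A+\epsilon J_n$ and passing to the limit).

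For the upper bound on $\per(A)$ I first record $\mu_A([n])=\per(J_n)=n!$, and $\mu_A([n]\setminus\{j\})=(n-1)!\,s_j$, the latter because in $\per(A_{[n]\setminus\{j\}})$ only the $j$-th column contributes a nontrivial factor $a_{\sigma^{-1}(j),j}$, and summing over $\sigma$ gives $(n-1)!\,s_j$. Setting $f(X):=\mu_A([n]\setminus X)$, Part~1 applied with $S=[n]\setminus X$ and $T=[n]\setminus\{j\}$ gives
$$f(X\cup\{j\})\le\frac{f(X)\,f(\{j\})}{f(\emptyset)}=\frac{s_j}{n}\,f(X)\qquad(j\notin X).$$
Telescoping one element at a time from $X=\emptyset$ up to $X=[n]$ yields
$$\per(A)=f([n])\le f(\emptyset)\prod_{j=1}^n\frac{s_j}{n}=\frac{n!}{n^n}\prod_{j=1}^n s_j.$$

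The principal technical point is the inductive step in Part~1: Proposition~\ref{Delta} supplies Rayleigh inequalities only for pairs of \emph{indices}, so promoting them to arbitrary pairs of \emph{sets} requires the two-step chaining above, whose viability rests on the symmetric-difference bookkeeping that forces the pivot $i$ to be chosen from the larger side of $S\triangle T$.
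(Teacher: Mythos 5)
Your proof is correct, but for the first inequality it takes a different route from the paper. The paper simply observes (just before the corollary) that $\mu_A$ is strongly Rayleigh and then cites Wagner's general result that Rayleigh measures satisfy the negative lattice condition \cite[Theorem 4.4]{W1}; you instead reprove the needed implication from scratch. Your argument is genuinely self-contained: differentiation in the variables of $R$ and specialization at $0$ (Lemma \ref{basic}(d,g)) reduce to a bivariate multiaffine stable polynomial, Proposition \ref{Delta} at the origin gives the coefficientwise ``diamond'' inequalities $\mu_A(R\cup\{i\})\mu_A(R\cup\{j\})\geq \mu_A(R)\mu_A(R\cup\{i,j\})$, and your induction on $|S\triangle T|$ is the standard local-to-global step for log-submodularity. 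Note that this exploits the full strength of stability: for a merely Rayleigh measure the coefficient inequalities at the origin are not immediate, which is why the cited theorem of \cite{W1} is less trivial than your special case. Your handling of vanishing factors by perturbing $A\mapsto A+\epsilon J_n$ is sound ($A+\epsilon J_n$ is still monotone with nonnegative entries, so the MMCPT applies, all the permanents become strictly positive, and the weak inequalities survive the limit $\epsilon\to 0^+$); the alternative ``short case analysis'' is left vague, but the perturbation suffices, so there is no gap. Your derivation of the second inequality (telescoping $f(X\cup\{j\})\leq (s_j/n)f(X)$ using $\mu_A([n])=n!$ and $\mu_A([n]\setminus\{j\})=(n-1)!\,s_j$) is exactly the paper's argument, up to the normalization by $n!$; the paper additionally sketches an elementary column-averaging proof of this bound, which you do not need.
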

\begin{proof}
The first inequality holds for all Rayleigh measures, see \cite[Theorem 4.4]{W1}. 

Let $\mu(S)= \per(A_{[n]\setminus S})/n!$. By the above $
\mu(S)\mu(T) \geq \mu(S\cup T)\mu(S \cap T) 
$
for all $S, T \subseteq [n]$. Thus $\mu(S \cup T) \leq \mu(S)\mu(T)$ whenever $S\cap T=\emptyset$, and after iteration 
$\mu([n]) \leq \mu(\{1\})\cdots \mu(\{n\})$. The proof now follows by observing that 
$\mu(\{i\})= s_i/n$ for all $i \in [n]$.  

One can also prove the last inequality by an elementary argument. If there are two 
different consecutive elements $b>a$ in a column of $A$, replace these by their average to obtain the matrix $A'$. It is plain to see that $\per(A) \leq \per(A')$. Iterating this procedure it follows that $\per(A) \leq \per(B)$, where each element in column $i$ of $B$ is equal to $s_i/n$. The inequality now follows.
\end{proof}
The second inequality in Corollary \ref{nlc} can be compared with the Van der Waerden conjecture
which asserts that the permanent of a doubly stochastic matrix is greater than or equal to $n!/n^n$. The Van der Waerden conjecture which was stated in 1926 was proved by Falikman in 1981; the case of equality was proved by Egorychev. Gurvits has recently provided a beautiful proof of a vast generalization of  the Van der Waerden conjecture using stable polynomials, see \cite{Gu} and 
\cite[Section 8]{W}.

If $f : 2^{[n]} \rightarrow \RR$, and $\mu$ is a discrete measure on $2^{[n]}$ we let 
$$
\int f d \mu = \sum_{S \in 2^{[n]}} f(S)\mu(S).
$$
A measure $\mu$ on $2^{[n]}$ is \emph{negatively associated} if for all increasing functions 
$f,g : 2^{[n]} \rightarrow \RR$ depending on disjoint sets of variables 
$$
\int fg d\mu \int d\mu \leq \int f d\mu \int g d\mu, 
$$
see e.g. \cite{BBL}. 
\begin{CORO}
Suppose that $A$ is an  $n$-by-$n$ monotone matrix with nonnegative coefficients.  Then the  
discrete measure $\mu_A : 2^{[n]} \rightarrow \RR_{\geq 0}$, defined by $\mu_A(S) = \per(A_S)$, is negatively associated. 
\end{CORO}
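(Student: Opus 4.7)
The plan is to deduce this from the strongly Rayleigh property established just above. First I would recall that for a monotone matrix $A$ with nonnegative entries the polynomial
\[
G_{\mu_A}(\z) = \sum_{S \subseteq [n]} \per(A_S) \prod_{j \in S} z_j
\]
is precisely $\per(J_n Z_n + A)$; this is a routine bookkeeping exercise: expanding $\per(z_j + a_{ij}) = \sum_\sigma \prod_i (z_{\sigma(i)} + a_{i,\sigma(i)})$ and grouping terms by the set $S = \{\sigma(i) : z_{\sigma(i)}\text{ was chosen}\}$ recovers $\per(A_S)\prod_{j \in S} z_j$ in the $S$-block. Hence $G_{\mu_A}$ is multiaffine in $\z$, and by the MMCPT it is real stable. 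Proposition \ref{Delta} then yields the Rayleigh inequality \eqref{Ra} for all $\x \in \RR^n$, so $\mu_A$ is strongly Rayleigh. This was already noted in the paragraph preceding Corollary \ref{nlc}, so no new work is required here.

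Second, I would invoke the structural theorem of Borcea--Br\"and\'en--Liggett \cite{BBL} which asserts that every strongly Rayleigh measure on $2^{[n]}$ is negatively associated. Applying this black box to $\mu_A$ completes the proof: for every pair of increasing functions $f, g : 2^{[n]} \rightarrow \RR$ depending on disjoint coordinate sets,
\[
\int fg \, d\mu_A \int d\mu_A \leq \int f \, d\mu_A \int g \, d\mu_A.
\]

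There is essentially no obstacle internal to this paper, since both ingredients are available. The one point worth being careful about is the nonnegativity hypothesis on $A$: without it $\mu_A$ is only a signed measure and the notion of negative association is not defined, whereas with nonnegative entries one has $\per(A_S) \geq 0$ for every $S$ (every term in the permanent expansion is nonnegative), so $\mu_A$ is genuinely a nonnegative measure and the BBL theorem applies verbatim.
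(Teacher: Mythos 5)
Your proposal is correct and follows essentially the same route as the paper: both deduce negative association from the strongly Rayleigh property of $\mu_A$ (established via the MMCPT and Proposition \ref{Delta}) together with the Borcea--Br\"and\'en--Liggett theorem that strongly Rayleigh measures are negatively associated. Your extra remarks identifying $G_{\mu_A}$ with $\per(J_nZ_n+A)$ and checking nonnegativity of $\per(A_S)$ are accurate but just make explicit what the paper leaves implicit.
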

\begin{proof}
The corollary follows from the fact that $\mu_A$ is strongly Rayleigh. Such measures are negatively associated, which was proved in \cite[Theorem 4.9]{BBL}
\end{proof}

\end{document}